\newcommand{\ve}{\varepsilon}
\newcommand{\commented}[1]{}
\newcommand{\A}{\mathbb{A}}
\newcommand{\F}{\mathbb{F}}
\newcommand{\PP}{\mathbb{P}}
\newcommand{\Q}{\mathbb{Q}}
\newcommand{\Z}{\mathbb{Z}}
\newcommand{\calA}{{\mathcal A}}
\DeclareMathOperator{\Res}{Res}
\DeclareMathOperator{\Cor}{Cor}
\DeclareMathOperator{\Nm}{Nm}
\DeclareMathOperator{\Br}{Br}
\DeclareMathOperator{\inv}{inv}
\DeclareMathOperator{\ev}{ev}
\DeclareMathOperator{\Gal}{Gal}
\let\badexists\exists
\renewcommand{\exists}{\,\,\badexists\,\,}
\let\badforall\forall
\renewcommand{\forall}{\,\,\badforall\,\,}
\let\badlim\lim
\renewcommand{\lim}{\badlim\limits}
\let\badlimsup\limsup
\renewcommand{\limsup}{\badlimsup\limits}
\let\badliminf\liminf
\renewcommand{\liminf}{\badliminf\limits}
\let\badsum\sum
\renewcommand{\sum}{\badsum\limits}
\let\badprod\prod
\renewcommand{\prod}{\badprod\limits}
\let\badbigcap\bigcap
\renewcommand{\bigcap}{\badbigcap\limits}
\let\badbigcup\bigcup
\renewcommand{\bigcup}{\badbigcup\limits}
\theoremstyle{plain}
\newtheorem{Theorem}{Theorem}[section]
\newtheorem{Lemma}[Theorem]{Lemma}
\newtheorem{Corollary}[Theorem]{Corollary}
\theoremstyle{definition}
\theoremstyle{remark}
\newtheorem{Remark}[Theorem]{Remark}
\title{On the Hasse Principle for Conic Bundles over Even Degree Extensions}
\author{Sam Roven}
\author{Alexander Wang}
\address{Department of Mathematics \& Statistics, University of San Francisco, San Francisco, CA 94117, USA}
\email{smroven@usfca.edu}
\urladdr{https://www.samroven.com}
\address{Department of Mathematics, University of Washington, Seattle, WA 98195, USA}
\email{awaw@uw.edu}
\urladdr{https://math.washington.edu/\~{}awaw}
\newcommand{\CT}{Colliot-Th\'el\`ene\xspace}
\begin{document}

\begin{abstract}
Let $k$ be a number field and let $\pi \colon X \rightarrow\PP_k^1$ be a smooth conic bundle. We show that if $X/k$ has four geometric singular fibers and either $X(\A_k)\neq \emptyset$ or $X/k$ has non-trivial Brauer group, then $X$ satisfies the Hasse principle over any even degree extension $L/k$. Furthermore for arbitrary $X$ we show that, conditional on Schinzel's hypothesis, $X$ satisfies the Hasse principle over all but finitely many quadratic extensions of $k$. We prove these results by showing the Brauer-Manin obstruction vanishes and then apply fibration method results of \CT, following \CT and Sansuc.
\end{abstract}

\maketitle

\section{Introduction}

Let $X \rightarrow \PP^1_k$ be a smooth conic bundle over a global field $k$ of characteristic not equal to $2$. Over local and global fields, the arithmetic of such conic bundles has been well studied. By the Hasse-Minkowski theorem, conic bundles over $\Q$ with two geometric singular fibers always satisfy the Hasse principle and work of Iskovskikh shows that conic bundles over $\Q$ with three geometric singular fibers always have rational points \cite{Isk96}. However, there exist conic bundles with 4 geometric singular fibers, specifically Ch\^atelet surfaces, that fail the Hasse principle, see \cite{Isk71} for the case over $\Q$ and \cite{Poo09}*{\S 5} for the general construction over a number field.

Since conics have numerous quadratic points, for any conic bundle there are numerous quadratic extensions over which they gain global points. However, we show that a much stronger statement holds, namely that for a large class of conic bundles with $4$ geometric singular fibers, the Hasse principle holds over \textbf{every} even degree extension. The main arithmetic idea in the proof dates back to work of Kanevsky \cite{Kan85}.

\begin{Theorem}\label{thm: main thm four fibers}
    Let $k$ be a number field, let $X \to \PP^1_k$ be a conic bundle with four geometric singular fibers, and assume that either $\frac{\Br(X)}{\Br_0(X)} \neq 0$ or $X(\A_k) \neq \emptyset $.  Then, if $L/k$ is an even degree extension, we have
    $$ X(L) \neq \emptyset \Leftrightarrow X(\A_L) \neq \emptyset. $$
\end{Theorem}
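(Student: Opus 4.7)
The implication $X(L) \neq \emptyset \Rightarrow X(\A_L) \neq \emptyset$ is immediate, so I focus on the converse. Assuming $X(\A_L) \neq \emptyset$, the fibration-method results of \CT announced in the paper reduce the Hasse principle for $X_L$ to showing the Brauer--Manin obstruction vanishes, so the plan is to produce a point of $X(\A_L)^{\Br(X_L)}$.

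The central local calculation, in the spirit of Kanevsky, is the following: for $A \in \Br(X)$ and $x_v \in X(k_v)$ regarded in $X(L_w)$ via $k_v \hookrightarrow L_w$, inflation of Brauer groups gives $\inv_w(A_L(x_v)) = [L_w : k_v]\,\inv_v(A(x_v))$, so
\[
\sum_w \inv_w\!\left(A_L(x|_L)\right) \;=\; [L:k]\sum_v \inv_v(A(x_v)),
\]
which vanishes modulo $1$ because $[L:k]$ is even and $\inv_v(A(x_v)) \in \tfrac{1}{2}\Z/\Z$ (the group $\Br(X)/\Br_0(X)$ is $2$-torsion, being generated by quaternion algebras from the singular fibers). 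Thus any $x \in X(\A_k)$ base-changed to $X(\A_L)$ is orthogonal to every class in the image of $\Br(X) \to \Br(X_L)$. Running in parallel is an analysis of $\Br(X_L)/\Br_0(X_L)$: since $X_{\bar L}$ is a smooth rational surface, Hochschild--Serre identifies this quotient with $H^1(L, \Pic(X_{\bar L}))$, which for a conic bundle with four geometric singular fibers is $2$-torsion of low rank, with explicit quaternion generators controlled by the Galois orbit structure of the four singular points.

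These inputs assemble into three cases. When $\Br(X)/\Br_0(X) = 0$ and $X(\A_k) \neq \emptyset$, \CT's theorem over $k$ already provides $X(k) \neq \emptyset$ and there is nothing more to do. When $\Br(X)/\Br_0(X) \neq 0$ and $X(\A_k) \neq \emptyset$, I take $x \in X(\A_k)$ and apply the Kanevsky computation, provided the cohomological analysis confirms every class of $\Br(X_L)/\Br_0(X_L)$ is hit (or at least paired trivially with $x|_L$) by $\Br(X)$. The hardest case, and the main obstacle, is $\Br(X)/\Br_0(X) \neq 0$ with $X(\A_k) = \emptyset$: here one begins with an arbitrary $(x_w) \in X(\A_L)$, replaces $(x_w)_{w \mid v}$ by the base change of some $y_v \in X(k_v)$ at every place where such a point exists, thereby collapsing the Brauer--Manin sum to the finite set $S = \{v : X(k_v) = \emptyset\}$, and then exploits the nontriviality of $\Br(X)/\Br_0(X)$ together with $X(L_w) \neq \emptyset$ for $w \mid v \in S$ to force the residual contribution to vanish. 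Precise control of $\Br(X_L)/\Br_0(X_L)$ relative to $\Br(X)/\Br_0(X)$ under the even-degree base change, together with the bad-place computation at $S$, is where I expect the substantive work to lie.
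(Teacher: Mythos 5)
Your outline matches the paper's high-level strategy (reduce to nonemptiness of the Brauer--Manin set via the \CT-et-al.\ results, then kill the obstruction by a Kanevsky-style invariant computation for base-changed points), and your case $\frac{\Br(X)}{\Br_0(X)}=0$, $X(\A_k)\neq\emptyset$ is handled exactly as in the paper. But the two places you yourself flag as ``where the substantive work lies'' are precisely the content of the proof, and your sketches for them do not work as stated. First, at a place $v$ with $X(k_v)=\emptyset$ there is no $k_v$-point to base change, and ``exploiting the nontriviality of $\Br(X)/\Br_0(X)$ together with $X(L_w)\neq\emptyset$'' is not a mechanism: nontriviality of the Brauer quotient plays no role in the local vanishing. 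The paper's argument (\Cref{thm: restricted brauer classes nonempty}) instead (a) chooses representatives of $\frac{\Br(X)}{\Br_0(X)}$ in $\ker(\Br(X)[2]\to\Br(X_\infty))$, which is possible by the explicit corestriction description of the Brauer group of a conic bundle (\Cref{lem: trivial classes at infinity generate brauer}); (b) uses \cite{CTC79} to see that $X(k_v)=\emptyset$ forces every completion $L_w/k_v$ with $w\mid v$ to have even degree (an odd-degree local point would give a zero-cycle of degree $1$); and (c) then takes $P_w\in X_\infty(L_w)$, which exists because the smooth conic $X_\infty$ acquires points over every even-degree local extension (\Cref{lem: even degree conic has points}), and on which the chosen classes evaluate to $0$. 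None of the ideas (a)--(c) -- the normalization at infinity, the zero-cycle parity argument, or the conic-over-even-degree-extension lemma -- appears in your proposal, and without them the contribution at the places in your set $S$ is not controlled.

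Second, orthogonality to $\Res_{L/k}(\Br(X))$ only gives $X(\A_L)^{\Br(X_L)}\neq\emptyset$ if the restriction map $\frac{\Br(X)}{\Br_0(X)}\to\frac{\Br(X_L)}{\Br_0(X_L)}$ is surjective, and you leave this as a ``provided.'' This is not automatic: new classes genuinely can appear over $L$ (this is exactly what happens in \Cref{thm: PHPF}). The paper closes this gap by a structural analysis specific to four geometric singular fibers: if $X(L)=\emptyset$ and the restriction map is not surjective, then $S_L$ must consist of two degree-$2$ points with $\Nm(\alpha_P)$ a square over $L$, and tracing through the norm description of the Brauer group forces $\frac{\Br(X)}{\Br_0(X)}=0$ (\Cref{cor: no rat pts and brauer nonzero case} and \Cref{cor: not surjective case}), contradicting the hypothesis in the remaining case. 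Note also that the paper does not need to split on whether $X(\A_k)$ is empty: once the two points above are established, the case $\frac{\Br(X)}{\Br_0(X)}\neq 0$ with $X(\A_k)=\emptyset$ is subsumed in the same argument. As written, your proposal is a correct skeleton with the two essential arguments missing.
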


For arbitrary conic bundles we do not have such unconditional results, however, we can extend our prior results to conic bundles with more geometric singular fibers at the expense of assuming Schinzel's hypothesis as well as restricting to quadratic extensions.

\begin{Theorem}\label{thm: main thm arb fibers}
    Let $k$ be a number field and let $X \to \PP^1_k$ be a conic bundle.  Then, there exists an explicit finite set $S$ of quadratic extensions of $k$ such that for every quadratic extension $L/k \notin S$, we have
    $$ X(\A_L)^{\Br} \neq \emptyset \Leftrightarrow X(\A_L) \neq \emptyset. $$
    In particular, if $X$ has at most $5$ geometric singular fibers, or assuming Schinzel's hypothesis, then $X$ satisfies the Hasse principle over $L$.
\end{Theorem}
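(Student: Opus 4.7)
The implication $X(\A_L)^{\Br}\neq\emptyset\Rightarrow X(\A_L)\neq\emptyset$ is immediate, and the ``In particular'' clause then follows from the fibration method of \CT and Sansuc applied to the base-changed conic bundle $X_L\to\PP^1_L$: this unconditionally gives ``Brauer--Manin is the only obstruction'' for conic bundles over a number field with at most $5$ geometric singular fibers, and under Schinzel's hypothesis for conic bundles with arbitrarily many fibers. Thus the essential content is to show that the Brauer--Manin pairing on $X(\A_L)$ is identically trivial for all but finitely many quadratic $L/k$.

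My plan is to analyze how both $\Br(X_L)/\Br_0(X_L)$ and the evaluations of its classes at adelic points behave under quadratic base change. Since $X$ is a rational surface, $\Br(X_{\bar k})=0$, and both $\Br(X)/\Br_0(X)$ and $\Br(X_L)/\Br_0(X_L)$ embed respectively into the finite $2$-torsion groups $H^1(G_k,\Pic(X_{\bar k}))$ and $H^1(G_L,\Pic(X_{\bar k}))$, with explicit quaternion-algebra representatives $(a_i,b_i)$ attached to the singular fibers of $\pi$ (\CT--Sansuc). The inflation-restriction sequence attached to $G_k\to\Gal(L/k)$ shows that the cokernel of $\Res\colon\Br(X)/\Br_0(X)\to\Br(X_L)/\Br_0(X_L)$ is controlled by $H^1(\Gal(L/k),\Pic(X_{\bar k})^{G_L})$, and this is non-trivial only when the $G_L$-invariant sublattice of $\Pic(X_{\bar k})$ strictly enlarges upon restriction from $G_k$; since the Galois action on $\Pic(X_{\bar k})$ factors through a fixed finite quotient, this happens for only finitely many quadratic $L/k$, and these are the first contribution to $S$.

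For $L$ outside this exceptional set, every class in $\Br(X_L)/\Br_0(X_L)$ is the restriction of a $2$-torsion class $\alpha\in\Br(X)$ that is ramified only over a fixed finite set $\Sigma$ of primes of $k$. I would then analyze the BM sum $\sum_w \inv_w(\alpha_L(y_w))$ place-by-place. The Kanevsky-style observation is that, for local points $y_w$ pulled back from $k_v$-rational points, at any $v$ that is inert or ramified in $L$ the local invariant doubles under $\Res$ and hence vanishes; the global BM pairing thus reduces to a sum over split primes in $\Sigma$. Exploiting weak approximation on $X_L$ at the finitely many primes above $\Sigma$ together with the explicit conic-bundle local structure of $X_{k_v}$, I would show that only finitely many quadratic $L$ admit a splitting pattern at $\Sigma$ which permits a nonzero split-prime sum; these are adjoined to $S$ to complete the description.

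The main obstacle will be the last step: producing an \emph{explicit} finite set of quadratic extensions outside of which the split-prime contribution must vanish on every adelic point. A key subtlety is that local points $y_w\in X(L_w)$ at inert or ramified $v$ need not be pulled back from $X(k_v)$, so the doubling argument applies only after first moving $(y_w)$ within $X(\A_L)$; justifying such a move requires care when $X(\A_k)=\emptyset$. A secondary issue is that the quaternion-algebra presentation of $\alpha$ is not unique, and a poor choice can obscure the vanishing; isolating a presentation whose ramification set and residues are adapted to the singular fibers of $\pi$ should be the technical heart of making $S$ effective.
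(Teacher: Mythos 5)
Your high-level structure matches the paper's --- a finite exceptional set of quadratic extensions controlling surjectivity of $\Res_{L/k}$ on $\Br/\Br_0$, plus a Kanevsky-type argument that the restricted classes pair trivially on $X(\A_L)$, and then \CT--Sansuc--Swinnerton-Dyer / Schinzel to convert an empty obstruction into a point. The first half is essentially fine: although your inflation--restriction bookkeeping is off (the term $H^1(\Gal(L/k),\Pic(X_{\bar k})^{G_L})$ controls the \emph{kernel} of restriction, not the cokernel), the correct conclusion you are reaching for --- that $\Res_{L/k}$ is an isomorphism whenever $L$ is linearly disjoint from the splitting field of $\Pic(X_{\bar k})$, so only finitely many quadratic $L$ are problematic --- is true and is exactly the Remark following \Cref{cor: arbitrary bad fibers avoid finitely many fields}; the paper makes the set explicit via \Cref{lem: unchanged S and norms make res isom}, in terms of the residue fields $\mathbf{k}(P)$ and the fields $k(\sqrt{\beta_T})$, $\beta_T=\prod_{P\in T}\Nm_{\mathbf{k}(P)/k}(\alpha_P)$.

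The genuine gap is in the evaluation step, at exactly the point you flag as ``the main obstacle.'' Your doubling argument $\inv_w = [L_w:k_v]\inv_v = 0$ only applies when the local point is pulled back from $X(k_v)$, i.e. when $X(k_v)\neq\emptyset$ (and note that at split $v$ with a $k_v$-point the diagonal choice also kills the sum, since the classes are $2$-torsion; so the reduction is not ``to split primes of $\Sigma$'' but to places with $X(k_v)=\emptyset$). At a place with $X(k_v)=\emptyset$ but $X(L\otimes_k k_v)\neq\emptyset$, the place is automatically non-split in $L$, and your proposal has no mechanism for computing the invariant there; invoking weak approximation on $X_L$ is not available unconditionally (Brauer--Manin can obstruct it), and adjoining further quadratic fields to $S$ according to ``splitting patterns at $\Sigma$'' is neither justified nor necessary. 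The paper's resolution (\Cref{thm: restricted brauer arbitrary bad fibers}) is to choose generators of $\frac{\Br(X)}{\Br_0(X)}$ lying in $\ker(\Br(X)[2]\to\Br(X_\infty))$ --- possible by \Cref{lem: trivial classes at infinity generate brauer}, since each $\calA_P=(\alpha_P,t-\tau_P)$ evaluates trivially at $\infty$ --- and then to use that $L_w/k_v$ is quadratic, so by \Cref{lem: even degree conic has points} the smooth conic $X_\infty$ has an $L_w$-point, at which these generators evaluate to $0$. Without this choice of representatives trivial on $X_\infty$ and the even-degree splitting of conics over local fields, the argument at the places where $X(k_v)=\emptyset$ does not go through, and this is the heart of the proof; with it, the vanishing holds for \emph{every} quadratic $L$, so no extra fields need to be added to $S$ beyond those governing surjectivity of $\Res_{L/k}$.
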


The set $S$ depends only on the closed points $P \in \PP^1_k$ over which $X$ has a singular fiber, together with the residues at such $P$ of the generic fiber $X_{\eta}$ considered as an element of $\Br(k(t))$.

The proofs of \Cref{thm: main thm four fibers} and \Cref{thm: main thm arb fibers} consist of two steps.  We first show that Brauer classes in the image of the restriction map $\Res_{L/k}\colon \frac{\Br(X)}{\Br_0(X)} \to \frac{\Br(X_L)}{\Br_0(X_L)}$ for even degree extensions $L/k$ do not obstruct the existence of rational points.  Then, we provide a characterization of when the restriction map is surjective (and therefore the Brauer-Manin set is nonempty) in terms of the singular fibers and the residues of the Brauer class corresponding to the generic fiber at these points.  Combining work of \CT and Coray \cite{CTC79}, \CT, Sansuc, and Swinnerton-Dyer \cite{CTSSD87}, and \CT and Swinnerton-Dyer \cite{CTSD94} show that the Brauer-Manin obstruction is the only obstruction to rational points for conic bundles with five or fewer geometric singular fibers, and if one assumes Schinzel's hypothesis, this can be extended to conic bundles with arbitrarily many geometric singular fibers, see \Cref{lem: BMO only obstruction}.

In 2009, Pete Clark coined the notion of a potential Hasse principle failure for smooth, geometrically irreducible varieties over number fields. In particular, given such a variety $V/k$, we say that it is a \emph{potential Hasse principle failure} (PHPF) if there exists an extension $L/k$ such that $V$ fails the Hasse principle over $L$. Clark showed the existence of infinitely many curves which were PHPFs. It was also conjectured that for any positive genus curve $C/k$ with no $k$-rational points, $C$ is a PHPF, see \cite{Cla05}*{Theorem 1, Conjecture 4}. \Cref{thm: main thm four fibers} shows that most conic bundles with four geometric singular fibers are not PHPFs, although there do exist cases where new Brauer classes can give a Brauer-Manin obstruction over $L$.  If the conditions of \Cref{thm: main thm four fibers} are not met, it is possible for there to exist a quadratic extension $L/k$, where the singular fibers of $X_L \rightarrow \PP_L^1$ lie over two closed points of degree $2$, over which the Hasse principle could possibly fail.

\begin{Theorem}\label{thm: PHPF}
    The Ch\^atelet surface $X/\Q$ given by
    $$ y^2 - 5z^2 = \frac{3}{5}(5t^4 + 7t^2 + 1) $$
    has no $\A_{\Q}$ points, but fails the Hasse principle over $L = \Q(\sqrt{29})$, i.e. $X$ is a potential Hasse principle failure.
\end{Theorem}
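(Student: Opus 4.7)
The plan is to verify three claims: (a) $X(\A_\Q) = \emptyset$ via a local obstruction at $p = 3$; (b) $X(\A_L) \neq \emptyset$ because this obstruction dissolves over $L$; (c) $X(L) = \emptyset$ via a Brauer--Manin obstruction on $X_L$ coming from Brauer classes that only appear after base change.

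For (a), I would analyze $P(t) = \tfrac{3}{5}(5t^4+7t^2+1)$ over $\Q_3$. Reducing modulo $3$, $5t^4+7t^2+1 \equiv 2t^4+t^2+1 \pmod 3$ evaluates to $1$ at each $t \in \F_3$, so $5t^4+7t^2+1 \in \Z_3^{\times}$ for every $t \in \Z_3$; the chart at infinity $s = 1/t$, in which the equation becomes $y'^2 - 5z'^2 = \tfrac{3}{5}(s^4 + 7s^2 + 5)$ with $s^4 + 7s^2 + 5 \not\equiv 0 \pmod 3$, handles the remaining $t \in \PP^1(\Q_3)$. Hence $v_3(P(t))$ is odd for every $t \in \PP^1(\Q_3)$. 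Since $5 \equiv 2 \pmod 3$ is a non-square unit, $\Q_3(\sqrt 5)/\Q_3$ is unramified with norm subgroup $\{a \in \Q_3^\times : v_3(a) \in 2\Z\}$, so no fiber conic has $\Q_3$-points and $X(\Q_3) = \emptyset$. That $X(\Q_p) \neq \emptyset$ at all other places is routine: positivity of $5t^4+7t^2+1$ on $\R$ handles the archimedean place, and at the remaining primes Hensel lifting shows a suitable $t$ makes $P(t)$ a norm from $\Q_p(\sqrt 5)$ (e.g.\ $t = 1$ at $p = 5$, and $t = 0$ at most other primes).

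For (b), since $29 \equiv 2 \pmod 3$ is a non-residue, $3$ is inert in $L = \Q(\sqrt{29})$ with unique prime $\mathfrak{p} \mid 3$ and $L_\mathfrak{p} = \Q_3(\sqrt{29})$. Because $29 \cdot 5^{-1} \equiv 1 \pmod 3$ is a square in $\Q_3$ by Hensel, $\Q_3(\sqrt{29}) = \Q_3(\sqrt 5)$, so $\sqrt 5 \in L_\mathfrak{p}$ and every fiber conic $y^2 - 5z^2 = c$ with $c \neq 0$ is split over $L_\mathfrak{p}$. At each other place $w \mid p$ of $L$, the inclusion $\Q_p \hookrightarrow L_w$ gives $X(L_w) \supseteq X(\Q_p) \neq \emptyset$ by (a), so $X(\A_L) \neq \emptyset$.

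For (c), the quartic factors over $L$ as
\[
 5t^4+7t^2+1 = 5(t^2 - \alpha_0)(t^2 - \beta_0), \quad \alpha_0 = \tfrac{-7+\sqrt{29}}{10},\ \beta_0 = \tfrac{-7-\sqrt{29}}{10},
\]
and since $\Nm_{L/\Q}(\alpha_0) = \alpha_0\beta_0 = 1/5$ is not a rational square, neither root is a square in $L$. Hence the singular fibers of $X_L \to \PP^1_L$ lie over two distinct degree-$2$ closed points of $\PP^1_L$, the configuration described after \Cref{thm: main thm four fibers}. Standard Ch\^atelet-surface Brauer group computations then produce a nontrivial class $\calA \in \Br(X_L)/\Br_0(X_L)$ represented on the generic fiber by the quaternion algebra $(5,\, 3(t^2-\alpha_0))$. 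The technical heart of the proof is an explicit computation of the local invariants $\inv_w(\calA(P_w)) \in \tfrac{1}{2}\Z/\Z$ at each place $w$ of $L$---nontrivial contributions come from the places above $2, 3, 5, 29$ and the archimedean place---showing $\sum_w \inv_w(\calA(P_w)) = \tfrac{1}{2}$ for every adelic point $(P_w) \in X(\A_L)$, possibly after adjusting $\calA$ by a constant Brauer class so that the sum is globally $\tfrac{1}{2}$. This yields $X(\A_L)^{\calA} = \emptyset$, and since the Brauer--Manin obstruction is the only one to the Hasse principle for Ch\^atelet surfaces \cite{CTSSD87}, $X(L) = \emptyset$, so $X$ is a PHPF. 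The main obstacle is this local-invariant computation; the preceding steps reduce to Hensel lifting and parity-of-valuation arguments.
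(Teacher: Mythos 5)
Your steps (a) and (b) are essentially the paper's: the odd-valuation argument at $3$ against the norm form of the unramified extension $\Q_3(\sqrt 5)$, local points elsewhere, and the observation that $\Q_3(\sqrt{29})=\Q_3(\sqrt 5)$ (the paper instead just invokes \Cref{lem: even degree conic has points} for the quadratic local extension) give $X(\A_\Q)=\emptyset$ and $X(\A_L)\neq\emptyset$. The identification of the new class $\calA$ (equivalently $\calB=(5,3(t^2+\overline{\alpha}))$) generating $\Br(X_L)/\Br_0(X_L)\cong\Z/2\Z$ also matches.

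The genuine gap is in (c): the entire content of the theorem is the verification that $\sum_w \inv_w(\ev_{\calA}(P_w))=\tfrac12$ for every $(P_w)\in X(\A_L)$, and you simply assert that this "technical heart" can be carried out. There is no a priori reason the constant value is $\tfrac12$ rather than $0$; that is precisely what must be proved, and your parenthetical escape hatch --- "possibly after adjusting $\calA$ by a constant Brauer class so that the sum is globally $\tfrac12$" --- cannot work, since twisting by a class from $\Br(L)$ changes the total sum by $\sum_w\inv_w(\beta)=0$ by reciprocity. Your accounting of places is also off: at $w\mid 2,3,29$ and at the archimedean places $5$ is a square in $L_w$, so those invariants all vanish (as does every place of good reduction, via constancy of the evaluation map and the split fiber at infinity); the whole obstruction is concentrated at the two places $w_1,w_2$ above $5$, which split in $L$ because $29$ is a square mod $5$. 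The paper's proof then shows, for any local point, that $w_i(t)\geq 0$ (else $P(t)\equiv 3$ times a fourth power of $5$ and is not a norm from $\Q_5(\sqrt5)$), and computes via the strong triangle inequality and Hensel's lemma that $\inv_{w_1}(\ev_\calA)\equiv\tfrac12$ identically, while at $w_2$ one must split into the cases $w_2(t)>0$ and $w_2(t)=0$, the latter handled by switching to the equivalent representative $\calB$, giving $\inv_{w_2}(\ev_\calA)\equiv 0$. None of these ideas appear in your sketch, so the obstruction is not actually established. A minor further point: once $X(\A_L)^{\Br(X_L)}=\emptyset$ you do not need \cite{CTSSD87} to conclude $X(L)=\emptyset$; that follows from the containment $X(L)\subseteq X(\A_L)^{\Br(X_L)}$.
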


Indeed, we show that for Ch\^atelet surfaces, quadratic extensions not contained in the set $S$ described by \Cref{thm: main thm arb fibers} are the only even degree extensions over which the Hasse principle can fail, and for general conic bundles with four singular fibers, there are only two possible types of behavior (see \Cref{cor: avoid three quadratic exts}). In addition, we trace the failure of the Hasse principle to a parity condition on the number of ramification places of a conic that splits in a fixed quadratic extension (see \Cref{thm: phpf converse}).

\section*{Acknowledgements}

The authors would like to thank our advisor Bianca Viray for her constant support and many fruitful discussions that led to these results, and her assistance in constructing the example of \Cref{thm: PHPF}.  The authors also thank Masahiro Nakahara and Isabel Vogt for their ideas regarding this project. Additionaly, we thank Jean-Louis \CT for providing helpful suggestions regarding the statements of \Cref{thm: main thm four fibers} and \Cref{thm: main thm arb fibers} and Jen Berg, David Harari, Dan Loughran, and Olivier Wittenberg for their helpful comments.

\section{Preliminaries}

\subsection{Notation}

Throughout this paper, let $k$ be a global field of characteristic not equal to $2$.  We say that $X \to \PP^1_k$ is a conic bundle over $\PP^1_k$ if $X$ is smooth and geometrically integral over $k$ and the morphism to $\PP^1_k$ is proper where the generic fiber is a smooth conic.  We assume that $X$ is minimal; that is, there does not exist a $(-1)$-curve on $X$ which can be blown down.

\begin{Lemma}\label{lem: even degree conic has points}
Let $k$ be a local field and let $C/k$ be a smooth conic. If $L/k$ is an even degree extension then $C(L)\neq \emptyset$.
\end{Lemma}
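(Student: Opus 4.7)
The plan is to translate the lemma into a Brauer-theoretic statement: a smooth conic $C$ over a field $k$ of characteristic $\neq 2$ corresponds to a class $[C] \in \Br(k)[2]$ (via the associated quaternion algebra), and $C$ acquires an $L$-point if and only if $\Res_{L/k}[C] = 0$ in $\Br(L)$. So it suffices to show that the restriction map $\Res_{L/k}$ annihilates $\Br(k)[2]$ whenever $[L:k]$ is even.

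First I would dispose of the archimedean cases. If $k = \C$, then $\Br(k) = 0$ and there is nothing to check. If $k = \R$, then the only extension of $\R$ of even degree is $L = \C$, and $\Br(\C) = 0$, so again the restriction is trivially zero.

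For non-archimedean $k$, I would invoke local class field theory, which provides the canonical isomorphism $\inv_k \colon \Br(k) \xrightarrow{\sim} \Q/\Z$ together with the compatibility
$$\inv_L \circ \Res_{L/k} = [L:k] \cdot \inv_k$$
for every finite extension $L/k$. Since $\inv_k[C] \in \{0, \tfrac{1}{2}\}$ and $[L:k]$ is even, the product $[L:k] \cdot \inv_k[C]$ lies in $\Z$ and therefore vanishes in $\Q/\Z$. Hence $\Res_{L/k}[C] = 0$, the conic $C_L$ is split, and $C(L) \neq \emptyset$.

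I do not anticipate any substantive obstacle: the entire content of the lemma is the parity observation applied to the multiplication-by-$[L:k]$ identity in local class field theory. The only thing that requires a moment of care is separating out the archimedean case, where the invariant-map formalism does not apply in the same form, but in those cases the Brauer group is either trivial or becomes trivial after the unique available even-degree extension.
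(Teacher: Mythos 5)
Your proposal is correct and follows essentially the same argument as the paper: identify the conic with a class in $\Br(k)[2]$ via its quaternion algebra and use the local class field theory fact that $\Res_{L/k}$ acts as multiplication by $[L:k]$ on the invariant, which kills $2$-torsion for even-degree $L/k$. Your separate treatment of the archimedean cases is a minor extra care point (the paper's $\Br k \cong \Q/\Z$ is stated for the non-archimedean case), but the substance is identical.
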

\begin{proof}
    From local class field theory, the restriction map factors as 
    $$\Res_{L/k}\colon \Br k \cong \Q/\Z \xrightarrow{[L : k]} \Q/\Z \cong \Br L$$
    Using the correspondence between smooth conics and quaternion algebras, we can apply this to any element of $\Br k[2]$ and obtain the result.
\end{proof}

\subsection{The Brauer group of a conic bundle}

After a change of coordinates, since $k$ is infinite, we may assume that the fiber over the point at infinity, $X_{\infty}$, is a smooth conic.  Let $S_k$ be the finite set of closed points in $\A^1_k$ with geometric singular fiber, where we will write $S = S_k$ when the ground field is unspecified, and let $|S|$ denote the number of such closed points, not counting degree, i.e. if $X$ has four geometric singular fibers and $|S| = 1$, then $S$ contains one closed point of degree $4$.

For any point $P \in \A_{k}^1$, let $\mathbf{k}(P)$ denote its residue field.  For $P \in S$, the fiber $X_P$ degenerates to the union of two lines, $\ell_P$ and $\ell'_P$, defined over $\mathbf{k}(P)(\sqrt{\alpha_P})$ for some $\alpha_P \in \mathbf{k}(P)^*$.  Since $X$ is assumed to be minimal, if $P \in S$ is a degree $1$ point, then $\alpha_P$ cannot be a square in $k$, as both $\ell$ and $\ell'$ would be $(-1)$-curves defined over $k$.  Note that the generic fiber of $X \to \PP^1$ is a Severi-Brauer variety over $k(t)$ and therefore an element of $\Br(k(t))$, and $\alpha_P$ is the residue of this Brauer class at $P$.  The group $\frac{\Br(X)}{\Br_0(X)}$ is then determined by $\mathbf{k}(P)$ and $\alpha_P$ for $P \in S$, as shown in the following lemma.

\begin{Lemma}\label{lem: product of norms classifies brauer class}\cite{CTS21}*{Corollary 11.3.5}
    Let $\pi\colon X \to \PP^1_k$ be a conic bundle with singular fibers over $S \subseteq \PP^1$.  Let $V \subseteq \F_2^{|S|}$ be defined by
    $$ V = \left\{\ve = (\ve_P)_{P \in S} \in \F_2^{|S|} \,\middle|\, \prod_{P \in S} \Nm_{\mathbf{k}(P)/k}(\alpha_P)^{\ve_P} \in k^{\times 2} \right\}. $$
    Then, $\frac{\Br(X)}{\Br_0(X)}$ is isomorphic to $V$ modulo the subspace generated by $(1,\dots,1)$.  The vector $\ve \in V$ corresponds to the image of $\pi^*(A_{\ve}) \in \Br(X)$, defined by
    $$ A_{\ve} = \sum_{P \in S} \ve_P \Cor_{\mathbf{k}(P)(t)/k(t)}\calA_P $$
    where $\calA_P = (\alpha_P , t - \tau_P) \in \Br(\mathbf{k}(P)(t))$ and $\tau_P \in \mathbf{k}(P)$ denotes the image of $t$ in $\mathbf{k}(P) = k[t]/P(t)$.
\end{Lemma}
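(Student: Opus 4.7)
The plan is to use the Faddeev residue sequence for $\Br(k(t))$ together with Amitsur's theorem applied to the generic fiber $X_\eta$ (a Severi--Brauer variety over $k(t)$) and purity, which gives $\Br(X) = \Br_{\mathrm{nr}}(k(X))$ since $X$ is regular and projective. The strategy is to identify the classes in $\Br(X)/\Br_0(X)$ with pullbacks of classes in $\Br(k(t))$ satisfying a specific residue condition, and then read off the parameterization by vectors $\epsilon \in V$.

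For the surjection onto $\Br(X)/\Br_0(X)$, note that $X_\eta$ corresponds to a quaternion algebra $\calA_\eta \in \Br(k(t))$ whose residues are exactly $[\alpha_P]$ at each $P \in S$ and zero elsewhere. Lichtenbaum's formula gives $\Br(X_\eta) \cong \Br(k(t))/\langle[\calA_\eta]\rangle$, so restricting any $\beta \in \Br(X)/\Br_0(X)$ to the generic fiber lifts $\beta$ to some $A \in \Br(k(t))$ modulo $\Br(k) + \langle[\calA_\eta]\rangle$. Unramifiedness of $\pi^*A$ along each component $\ell_P$ of a singular fiber, whose residue field is $\mathbf{k}(P)(\sqrt{\alpha_P})$, forces the residue $\partial_P A \in \mathbf{k}(P)^*/\mathbf{k}(P)^{*2}$ to lie in the kernel of restriction to $\mathbf{k}(P)(\sqrt{\alpha_P})$, i.e.\ in $\{0, [\alpha_P]\}$. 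Modulo $\Br(k)$, $A$ is then represented by a sum
$$A_\epsilon \;=\; \sum_{P \in S} \epsilon_P \Cor_{\mathbf{k}(P)(t)/k(t)}\bigl(\alpha_P, t - \tau_P\bigr), \qquad \epsilon \in \F_2^{|S|},$$
since $(\alpha_P, t - \tau_P)$ has residue $[\alpha_P]$ at $P$ and is unramified elsewhere on $\A^1_{\mathbf{k}(P)}$, properties preserved under corestriction.

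The main obstacle, and the source of the defining condition for $V$, is the residue at infinity. Since $X_\infty$ is smooth and geometrically integral, unramifiedness of $\pi^*A_\epsilon$ along $X_\infty$ is equivalent to the vanishing of $\partial_\infty A_\epsilon$ in $k^*/k^{*2}$ (using that $k$ is algebraically closed in $k(X_\infty)$). A tame-symbol computation yields $\partial_\infty(\alpha_P, t-\tau_P) \equiv \alpha_P$ modulo squares at the unique place of $\mathbf{k}(P)(t)$ above $\infty$, so corestriction contributes $\Nm_{\mathbf{k}(P)/k}(\alpha_P)$ to $\partial_\infty A_\epsilon$, and the sum gives $\partial_\infty A_\epsilon = \prod_{P} \Nm_{\mathbf{k}(P)/k}(\alpha_P)^{\epsilon_P}$; requiring this to be a square cuts out exactly $V$. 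Finally, Amitsur's theorem gives $\pi^*[\calA_\eta] = 0$ in $\Br(X)$, and residue matching identifies $[\calA_\eta]$ with $A_{(1,\ldots,1)}$ modulo $\Br(k)$, so the quotient by the line $\langle (1,\ldots,1) \rangle$ yields the asserted isomorphism. Injectivity of the quotient map follows from Faddeev reconstruction: if $A_\epsilon \in \Br(k) + \langle[\calA_\eta]\rangle$, matching residues at each $P \in S$ forces $\epsilon$ to be either $0$ or $(1,\ldots,1)$.
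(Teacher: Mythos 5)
The paper does not actually prove this lemma---it is quoted directly from Colliot-Th\'el\`ene--Skorobogatov \cite{CTS21}*{Corollary 11.3.5}---and your argument is essentially the proof given in that reference: identify $\frac{\Br(X)}{\Br_0(X)}$ with pullbacks of classes of $\Br(k(t))$ that are unramified on $X$ (purity plus Amitsur/Witt for the generic conic), pin down those classes via their Faddeev residues at the points of $S$ and at $\infty$, and quotient by the class of the generic fiber, which matches $A_{(1,\dots,1)}$. Your sketch is correct; the only points left tacit are that horizontal divisors impose no condition (automatic, since for $D$ dominating $\PP^1$ one has $k(t) \subseteq \calO_{X,D}$, so $\pi^*A$ is unramified along $D$) and that relative minimality guarantees each $\alpha_P \notin \mathbf{k}(P)^{\times 2}$, which your final injectivity step implicitly uses when matching residues.
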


In particular, in order for this quotient to be well-defined, note that $\prod_{P \in S}\Nm_{\mathbf{k}(P)/k}(\alpha_P) \in k^{\times 2}$ \cite{Sko15}*{Lemma 2.2}.  For a detailed explanation of this construction, see \cite{CTS21}*{Section 11.3.1}.

\begin{Corollary}\label{lem: trivial classes at infinity generate brauer}
   Let $\pi\colon X \to \PP^1_k$ be a conic bundle.  We have that $\frac{\Br(X)}{\Br_0(X)}$ is generated by images of elements of $\ker (\Br(X)[2] \rightarrow \Br(X_{\infty}))$.
\end{Corollary}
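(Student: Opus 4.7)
The plan is to take the generators $\pi^* A_\ve$ of $\Br(X)/\Br_0(X)$ provided by \Cref{lem: product of norms classifies brauer class} and modify each by a class pulled back from $\Spec k$ so that the new representative restricts to zero on $X_\infty$. Since $A_\ve$ is a sum of quaternion algebras, $\pi^* A_\ve$ is $2$-torsion, and since the modification lies in $\Br_0(X)$, the adjusted representative automatically lies in $\Br(X)[2]$ and represents the same class modulo $\Br_0(X)$.

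The key step is to show that $A_\ve \in \Br(k(t))$ is unramified at $t = \infty$, so that it admits a well-defined specialization $c_\ve := A_\ve(\infty) \in \Br(k)[2]$. To see this, I would compute the residue of each $\calA_P = (\alpha_P, t - \tau_P) \in \Br(\mathbf{k}(P)(t))$ at the unique place above $\infty$: the constant $\alpha_P$ is a unit there and $v_\infty(t - \tau_P) = -1$, so the residue equals $\alpha_P$ in $\mathbf{k}(P)^\times / \mathbf{k}(P)^{\times 2}$. Corestriction converts this to $\Nm_{\mathbf{k}(P)/k}(\alpha_P) \in k^\times / k^{\times 2}$, so summing with weights $\ve_P$ gives that the residue of $A_\ve$ at $\infty$ equals $\prod_{P \in S} \Nm_{\mathbf{k}(P)/k}(\alpha_P)^{\ve_P}$, which is a square by the defining condition on $V$ in \Cref{lem: product of norms classifies brauer class}.

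Having $c_\ve$ in hand, I would set $B_\ve := \pi^* A_\ve - \pi^* c_\ve \in \Br(X)[2]$. Since $\pi^* c_\ve \in \Br_0(X)$, the class of $B_\ve$ in $\Br(X)/\Br_0(X)$ equals that of $\pi^* A_\ve$, so the $B_\ve$ still generate by \Cref{lem: product of norms classifies brauer class}. To verify $B_\ve \in \ker(\Br(X)[2] \to \Br(X_\infty))$, I would observe that the restriction of $\pi^* A_\ve$ to $X_\infty$ factors through the specialization of $A_\ve$ at the rational point $\infty \in \PP^1(k)$ and thus coincides with the pullback of $c_\ve$ along $X_\infty \to \Spec k$, which is exactly $(\pi^* c_\ve)|_{X_\infty}$. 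Hence $B_\ve|_{X_\infty} = 0$. The only nontrivial content is the residue computation in the middle paragraph; everything else is a formal manipulation of the generators produced by \Cref{lem: product of norms classifies brauer class}.
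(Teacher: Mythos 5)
Your proposal is correct, and it follows the paper's overall strategy (take the explicit generators $\pi^*A_{\ve}$ of \Cref{lem: product of norms classifies brauer class} and control them over $t=\infty$), but the key step is handled by a genuinely different mechanism. The paper claims summand-by-summand that $\calA_P(\infty)=0$ and concludes that the classes $\pi^*A_{\ve}$ themselves already lie in $\ker(\Br(X)[2]\to\Br(X_\infty))$; strictly speaking each $\calA_P=(\alpha_P,t-\tau_P)$ is ramified at the place above $\infty$ (its residue there is the class of $\alpha_P$, exactly as in your computation), so the per-summand evaluation is an abuse and the vanishing really only occurs for the sum when $\ve\in V$. You avoid this entirely: you prove only that $A_{\ve}$ is unramified at $\infty$, via the residue computation $\partial_\infty(A_{\ve})=\prod_{P\in S}\Nm_{\mathbf{k}(P)/k}(\alpha_P)^{\ve_P}$, which is a square precisely because $\ve\in V$ --- this is where the defining condition of $V$ enters --- and then you correct by the constant specialization $c_{\ve}=A_{\ve}(\infty)\in\Br(k)[2]$, whose pullback lies in $\Br_0(X)$, so the twisted classes $B_{\ve}$ represent the same generators and restrict to zero on $X_\infty$. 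Your route buys robustness (no evaluation at $\infty$ is ever needed), while the paper's route gives the slightly stronger conclusion that the untwisted $\pi^*A_{\ve}$ are already in the kernel; indeed a short extra computation, e.g. writing $\calA_P=(\alpha_P,(t-\tau_P)/t)+(\alpha_P,t)$ and applying the projection formula to the second term, shows $c_{\ve}=0$, so your twist is by zero --- but your argument does not need to know this. One small wording point: $B_{\ve}$ is $2$-torsion because $c_{\ve}$ is the specialization of a $2$-torsion class (as your notation $c_{\ve}\in\Br(k)[2]$ records), not merely because the correction lies in $\Br_0(X)$.
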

 
\begin{proof}
    Use the notation of \Cref{lem: product of norms classifies brauer class}.  For each $P \in S$, we have that $\mathcal{A}_P(\infty)=(1,a_P)=0 \in \Br(\mathbf{k}(P))$. Since $\Cor_{\mathbf{k}(P)/k}$ is a group homomorphism, we have that each
    $A_{\ve} \in \ker (\Br(\mathbf{k}(\PP^1))[2] \xrightarrow{\infty^*} \Br(k)[2])$ and so $\pi^*(A_{\varepsilon}) \in \ker (\Br(X)[2] \rightarrow \Br(X_{\infty}))$.
\end{proof}

\begin{Lemma}\label{lem: unchanged S and norms make res isom}
    Let $L/k$ be a finite extension such that $S_L = S_k$.  For every $T \subseteq S$, let $\beta_T := \prod_{P \in T} \Nm_{\mathbf{k}(P)/k}(\alpha_P)$. 
 If $\beta_T \in k^{\times 2} \Leftrightarrow \beta_T \in L^{\times 2}$ for all $T \subseteq S$, then the map $\Res_{L/k}\colon \frac{\Br(X)}{\Br_0(X)} \to \frac{\Br(X_L)}{\Br_0(X_L)}$ is an isomorphism.
\end{Lemma}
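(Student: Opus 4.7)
The plan is to use the explicit description from \Cref{lem: product of norms classifies brauer class} to identify both sides of $\Res_{L/k}$ with the same subquotient of $\F_2^{|S|}$, and then show that the restriction map acts as the identity under this identification.

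Under the hypothesis, every closed point $P \in S$ remains closed over $L$ with residue field $\mathbf{k}(P) \otimes_k L$ (still a field) of degree $[\mathbf{k}(P):k]$ over $L$. The residues $\alpha_P$ and elements $\tau_P$ are unchanged, and because the extension degrees match, $\Nm_{\mathbf{k}(P) \otimes_k L / L}(\alpha_P) = \Nm_{\mathbf{k}(P)/k}(\alpha_P)$ viewed in $L$ via $k \hookrightarrow L$. Applying \Cref{lem: product of norms classifies brauer class} to $X_L \to \PP^1_L$ therefore identifies $\frac{\Br(X_L)}{\Br_0(X_L)}$ with $V_L / \langle (1,\ldots,1) \rangle$, where
$$ V_L = \left\{\varepsilon \in \F_2^{|S|} : \beta_{T_\varepsilon} \in L^{\times 2}\right\}, \qquad T_\varepsilon = \{P \in S : \varepsilon_P = 1\}. $$
The hypothesis $\beta_T \in k^{\times 2} \Leftrightarrow \beta_T \in L^{\times 2}$ for all $T \subseteq S$ then gives $V = V_L$, so source and target are canonically identified.

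For the second step, I would apply $\Res_{L(t)/k(t)}$ termwise to the representative $A_{\varepsilon} = \sum_{P} \varepsilon_P \Cor_{\mathbf{k}(P)(t)/k(t)} \calA_P$. Because $\mathbf{k}(P) \otimes_k L$ is a single field, the Mackey double-coset formula collapses to the clean identity $\Res \circ \Cor = \Cor \circ \Res$, giving
$$ \Res_{L(t)/k(t)} \Cor_{\mathbf{k}(P)(t)/k(t)} (\alpha_P, t - \tau_P) = \Cor_{(\mathbf{k}(P) \otimes_k L)(t)/L(t)} (\alpha_P, t - \tau_P), $$
which is precisely the $P$-th summand of the representative $A^L_{\varepsilon}$ associated to $X_L \to \PP^1_L$ by \Cref{lem: product of norms classifies brauer class}. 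Since pullback commutes with restriction, $\Res_{L/k}(\pi^* A_{\varepsilon}) = \pi_L^* A^L_{\varepsilon}$, so $\Res_{L/k}$ is the identity on $V/\langle (1,\ldots,1) \rangle$ and hence an isomorphism.

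The main point requiring care is the compatibility of restriction and corestriction above; it relies crucially on $\mathbf{k}(P) \otimes_k L$ remaining a field, which is exactly what the first hypothesis guarantees. Everything else is direct bookkeeping with the presentation from \Cref{lem: product of norms classifies brauer class}.
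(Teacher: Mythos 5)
Your argument is correct and follows essentially the same route as the paper: both proofs identify $\frac{\Br(X)}{\Br_0(X)}$ and $\frac{\Br(X_L)}{\Br_0(X_L)}$ with the same subquotient of $\F_2^{|S|}$, using that the points of $S$ remain single closed points (so the index sets agree and $\Nm_{\mathbf{k}(P)\cdot L/L}(\alpha_P)=\Nm_{\mathbf{k}(P)/k}(\alpha_P)$) and that the $\beta_T$-hypothesis forces your $V$ and $V_L$ to coincide. The only real difference is how compatibility with $\Res_{L/k}$ is checked: you verify it on the explicit representatives $A_{\ve}$ of \Cref{lem: product of norms classifies brauer class} via the collapsed double-coset identity $\Res\circ\Cor=\Cor\circ\Res$ (valid precisely because $\mathbf{k}(P)\otimes_k L$ is a field), whereas the paper deduces it from the functoriality under base change of the exact sequence of \cite{CTS21}*{Proposition 11.3.4}.
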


\begin{proof}
    Using \cite{CTS21}*{Proposition 11.3.4} on both $k$ and $L$, we have the diagram
    $$
    \begin{tikzcd}
    0 \arrow[r] & \Br(k) \arrow[r] \arrow[d] & \Br(X) \arrow[r] \arrow[d] & {\F_2^{|S_k|}/(1,\dots,1)} \arrow[r] \arrow[d] & k^{\times}/k^{\times 2} \\
    0 \arrow[r] & \Br(L) \arrow[r]           & \Br(X_L) \arrow[r]         & {\F_2^{|S_L|}/(1,\dots,1)} \arrow[r]         & L^{\times}/L^{\times 2}
    \end{tikzcd}
    $$
    where the last horizontal map sends $\ve \in \F_2^{|S_k|}/(1,\dots,1)$ to $\prod_{P \in S} \Nm_{\mathbf{k}(P)/k}(\alpha_P)^{\ve_P}$ and analogously for $L$, and the vertical maps are induced by $\Res_{L/k}$.  In particular, we have that the map $\F_2^{|S_k|}/(1,\dots,1) \to \F_2^{|S_L|}/(1,\dots,1)$ sends $e_P$, the basis vector corresponding to $P$, to the sum $e_{Q_1} + \cdots + e_{Q_n}$, where $P$ splits into $Q_1,\dots,Q_n$ over $L$.  The diagram above induces isomorphisms compatible with restriction
    $$
    \begin{tikzcd}
    \frac{\Br(X)}{\Br(k)} \arrow[r] \arrow[d] & {\ker\left( \F_2^{|S_k|}/(1,\dots,1) \to k^{\times}/k^{\times 2}\right)} \arrow[d] \\
    \frac{\Br(X_L)}{\Br(L)} \arrow[r]         & {\ker\left( \F_2^{|S_L|}/(1,\dots,1) \to L^{\times}/L^{\times 2}\right)}        
    \end{tikzcd}
    $$
    where the vertical maps are given by $\Res_{L/k}$.  Since $S = S_L$ and using the explicit description of the map above, the map $\F_2^{|S_k|}/(1,\dots,1) \to \F_2^{|S_L|}/(1,\dots,1)$ is an isomorphism.  The condition on $\beta_T$ precisely implies that the kernels will be equal, and therefore $\Res_{L/k}$ is an isomorphism as desired.
\end{proof}

\begin{Lemma}\label{cor: no rat pts and brauer nonzero case}
    Let $k$ be a global field of characteristic not equal to $2$ and let $X \to \PP^1_k$ be a conic bundle with four geometric singular fibers.  If $X(k) = \emptyset$ and $\frac{\Br(X)}{\Br_0(X)} \neq 0$, then $S = \{P_1,P_2\}$ with $\deg(P_i) = 2$ and $\Nm_{\mathbf{k}(P_i)/k}(\alpha_{P_i}) \in k^{\times 2}$ for $i \in \{1,2\}$, and $\frac{\Br(X)}{\Br_0(X)} = \Z/2\Z$.
\end{Lemma}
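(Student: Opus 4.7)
The plan is to first use the hypothesis $X(k) = \emptyset$ to drastically restrict the possible degrees of points in $S$, reducing to only two shapes of $S$, and then apply \Cref{lem: product of norms classifies brauer class} in each remaining case.

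The key observation I would establish first is that no $P \in S$ can have degree $1$. Indeed, if $P \in \PP^1(k)$, then the fiber $X_P$ is a $k$-conic that geometrically degenerates to two lines meeting at a single point; the unique singular point of this geometrically singular conic is Galois-invariant, hence defined over $k$. Since $X$ is smooth, this $k$-point of $X_P$ is a smooth $k$-point of $X$, contradicting $X(k) = \emptyset$. So every $P \in S$ has $\deg P \geq 2$, and since the degrees sum to $4$, the only options are $|S| = 1$ with a single degree-$4$ point, or $S = \{P_1, P_2\}$ with $\deg P_1 = \deg P_2 = 2$.

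Next, I would use \Cref{lem: product of norms classifies brauer class}, together with the identity $\prod_{P \in S} \Nm_{\mathbf{k}(P)/k}(\alpha_P) \in k^{\times 2}$, to compute $\frac{\Br(X)}{\Br_0(X)}$ in each case. When $|S| = 1$, the vector $(1)$ automatically lies in $V$, so $V = \F_2$ and $V/\langle (1) \rangle = 0$, contradicting $\frac{\Br(X)}{\Br_0(X)} \neq 0$. When $S = \{P_1, P_2\}$ with both points of degree $2$, the subspace $V \subseteq \F_2^2$ always contains $(1,1)$, and because the product of the two norms is a square, the two conditions $\Nm_{\mathbf{k}(P_1)/k}(\alpha_{P_1}) \in k^{\times 2}$ and $\Nm_{\mathbf{k}(P_2)/k}(\alpha_{P_2}) \in k^{\times 2}$ are equivalent. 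So either $V = \langle (1,1) \rangle$ with trivial quotient, or $V = \F_2^2$ with $V/\langle (1,1) \rangle = \Z/2\Z$. The nonvanishing hypothesis forces the latter alternative, which yields exactly the stated conclusion.

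There is no serious obstacle: once the rational-point observation for degree-$1$ singular fibers is in hand, everything collapses to a short case analysis using the classification lemma and the square-product relation among the $\Nm_{\mathbf{k}(P)/k}(\alpha_P)$.
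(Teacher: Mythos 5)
Your proposal is correct and follows essentially the same route as the paper: exclude degree-$1$ points of $S$ via the rational point at the singular locus of the fiber (the paper phrases this as the intersection of $\ell_P$ and $\ell'_P$), then run the case analysis on $|S|$ using \Cref{lem: product of norms classifies brauer class} and the relation $\prod_{P\in S}\Nm_{\mathbf{k}(P)/k}(\alpha_P)\in k^{\times 2}$. Your explicit treatment of the $|S|=2$ case (the two norm conditions being equivalent, so $V$ is either $\langle(1,1)\rangle$ or $\F_2^2$) is slightly more detailed than the paper's but is the same argument.
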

\begin{proof}
    Using the description of $\frac{\Br(X)}{\Br_0(X)}$ in \Cref{lem: product of norms classifies brauer class}, we proceed by cases on the size of $S$ and degrees of points in $S$.  If $|S| = 1$, then $V \cong \F_2$ giving a quotient of $0$, which violates our assumption that $\frac{\Br(X)}{\Br_0(X)} \neq 0$.
    If any of the singular fibers of $X$ lie over a degree $1$ point $P\in \A^1_k$, then we can obtain a $k$-rational point on $X$ by taking the intersection point of the lines $\ell_P$ and $\ell'_P$ for such $P \in S$.  Thus, if $|S| = 3$ or $|S| = 4$, then at least one $P \in S$ must be degree $1$, violating the assumption that $X(k) = \emptyset$.
    Therefore, if $X(k) = \emptyset$ and $\frac{\Br(X)}{\Br_0(X)} \neq 0$, we must have $|S| = 2$ consisting of two points of degree $2$.  If either $\Nm_{\mathbf{k}(P_i)/k}(\alpha_{P_i}) \notin k^{\times 2}$, then $\frac{\Br(X)}{\Br_0(X)} = 0$, so both must be non-squares in $k$.
\end{proof}

\begin{Corollary}\label{cor: not surjective case}
    Let $k$ be a global field of characteristic not equal to $2$ and let $X \to \PP^1_k$ be a conic bundle with four geometric singular fibers.  If there exists an even degree extension $L/k$ such that $X(L) = \emptyset$ and $\Res_{L/k}\colon \frac{\Br(X)}{\Br_0(X)} \to \frac{\Br(X_L)}{\Br_0(X_L)}$ is not surjective, then either
    \begin{enumerate}[label=$(\roman*)$]
        \item\label{case: Sk = 1 , SL = 2} $|S_k| = 1$ and $|S_L| = 2$ with $P,Q \in S_L$ both degree $2$ and interchanged by $\Gal(L/k)$, or
        \item\label{case: Sk = 2 , SL = 2} $|S_k| = |S_L| = 2$ with $P,Q \in S_L$ both degree $2$ and fixed by $\Gal(L/k)$, with $\Nm_{\mathbf{k}(P)/k}(\alpha_P) \notin k^{\times 2}$ and $\Nm_{L\cdot\mathbf{k}(P)/L}(\alpha_P) \in L^{\times 2}$ (equivalently for $Q$).
    \end{enumerate}
    Furthermore, in both cases, we must have $\frac{\Br(X)}{\Br_0(X)} = 0$.
\end{Corollary}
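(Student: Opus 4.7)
The plan is to reduce to a case analysis using the structural description of $\frac{\Br(X)}{\Br_0(X)}$ from \Cref{lem: product of norms classifies brauer class}, the rigidity of the Brauer group given by \Cref{cor: no rat pts and brauer nonzero case}, and the sufficient condition for the restriction map to be an isomorphism provided by \Cref{lem: unchanged S and norms make res isom}. The starting observation is that $X(L) = \emptyset$ implies $X(k) = \emptyset$, and the same construction from \Cref{cor: no rat pts and brauer nonzero case} — intersecting the two components $\ell_P, \ell'_P$ of a singular fiber over any degree $1$ point — rules out degree $1$ points in both $S_k$ and $S_L$. Since the total geometric degree of each of these sets equals $4$, this immediately narrows us to two possibilities for $S_k$: (A) $|S_k| = 1$ consisting of a single degree $4$ point $P$, or (B) $|S_k| = 2$ consisting of two degree $2$ points $P_1, P_2$.

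In case (A), I would first note that $\F_2^{|S_k|}/(1,\ldots,1) = 0$, so \Cref{lem: product of norms classifies brauer class} immediately yields $\frac{\Br(X)}{\Br_0(X)} = 0$. The no-degree-$1$ constraint over $L$ forces the decomposition of $\mathbf{k}(P) \otimes_k L$ to consist either of a single degree $4$ point or of two degree $2$ points $Q_1, Q_2$. The first option gives $\frac{\Br(X_L)}{\Br_0(X_L)} = 0$ as well, making $\Res_{L/k}$ trivially surjective and contradicting our hypothesis. Hence $P$ must split into two degree $2$ points $Q_1, Q_2$, and these correspond to the two factors of $\mathbf{k}(P) \otimes_k L$ and so are swapped by the natural Galois action (through the Galois closure when $L/k$ is not itself Galois), putting us in case (i).

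For case (B), the plan is to apply \Cref{cor: no rat pts and brauer nonzero case} contrapositively. If $\frac{\Br(X)}{\Br_0(X)} \neq 0$, then the norms $\Nm_{\mathbf{k}(P_i)/k}(\alpha_{P_i})$ are squares in $k$ and hence squares in $L$; \Cref{lem: unchanged S and norms make res isom} then gives that $\Res_{L/k}$ is an isomorphism, contradicting non-surjectivity. So $\frac{\Br(X)}{\Br_0(X)} = 0$ and the norms are non-squares in $k$. The no-degree-$1$ constraint forces each $P_i$ to remain a single degree $2$ point over $L$, and a direct computation via \Cref{lem: product of norms classifies brauer class} shows that, since the domain of $\Res_{L/k}$ is zero, the map fails to be surjective precisely when $\frac{\Br(X_L)}{\Br_0(X_L)} \neq 0$, which in this setting is equivalent to $\Nm_{L \cdot \mathbf{k}(P_i)/L}(\alpha_{P_i}) \in L^{\times 2}$. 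This is exactly case (ii), and we have confirmed $\frac{\Br(X)}{\Br_0(X)} = 0$ in both cases along the way.

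The main obstacle I expect is the careful bookkeeping of the interaction between the degree combinations and the norm conditions under base change, particularly in case (B), where one has to verify that the explicit description of the map $V_k \to V_L$ from the proof of \Cref{lem: unchanged S and norms make res isom} really pins down non-surjectivity as the failure of the norms to be squares in $k$ but not in $L$. Once that is in place, each of the three earlier lemmas applies essentially directly in its designated sub-case, and no further input is needed.
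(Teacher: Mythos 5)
Your argument is correct and uses essentially the same ingredients as the paper's proof: the exclusion of degree $1$ points in $S_k$ and $S_L$ via the intersection point of the two lines, the structure result of \Cref{cor: no rat pts and brauer nonzero case}, the computation of $\frac{\Br(X_L)}{\Br_0(X_L)}$ from \Cref{lem: product of norms classifies brauer class}, and \Cref{lem: unchanged S and norms make res isom} to rule out square norms in case (ii). The only difference is organizational — you case on the shape of $S_k$ first, while the paper first applies \Cref{cor: no rat pts and brauer nonzero case} over $L$ (using that non-surjectivity forces $\frac{\Br(X_L)}{\Br_0(X_L)} \neq 0$) and then descends to $S_k$ — so this is the same proof in a slightly different order.
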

\begin{proof}
    Let $L/k$ be an even degree extension such that $X(L) = \emptyset$ and $\Res_{L/k}\colon \frac{\Br(X)}{\Br_0(X)} \to \frac{\Br(X_L)}{\Br_0(X_L)}$ is not surjective.  In particular, $\frac{\Br(X_L)}{\Br_0(X_L)} \neq 0$, so by \Cref{cor: no rat pts and brauer nonzero case}, we must have that $S_L = \{P,Q\}$ both of degree $2$, $\frac{\Br(X_L)}{\Br_0(X_L)} \cong \Z/2\Z$ and $\Nm_{L\cdot\mathbf{k}(P)/L}(\alpha_P) \in L^{\times 2}$.  Since $S_L$ is the base change of $S_k$, either $S_k$ consists of one point of degree $4$ split into $P$ and $Q$ over $L$, which gives case \ref{case: Sk = 1 , SL = 2}.  Otherwise, $S_k$ consists of $P$ and $Q$ both degree $2$ points over $k$.  If $\Nm_{\mathbf{k}(P)/k}(\alpha_P) \in k^{\times 2}$, the hypotheses of \Cref{lem: unchanged S and norms make res isom} would be satisfied, and thus the restriction map would be an isomorphism.  We conclude that $\Nm_{\mathbf{k}(P)/k}(\alpha_P) \notin k^{\times 2}$, which gives case \ref{case: Sk = 2 , SL = 2}.  Additionally, since $X(k) \subseteq X(L) = \emptyset$, the contrapositive of \Cref{cor: no rat pts and brauer nonzero case} shows that $\frac{\Br(X)}{\Br_0(X)} = 0$.
\end{proof}

\begin{Remark}
    If $X$ is a Ch\^atelet surface given by the affine equation $y^2 - az^2 = f(x)$ for a quartic polynomial $f(x)$, then $S$ is geometrically given by the roots of $f$ over a splitting field, $\alpha_P = a$ for all $P \in S$, and the only possible case of \Cref{cor: not surjective case} is \ref{case: Sk = 1 , SL = 2}.
\end{Remark}

The following theorem is a consequence of work of \CT and Coray, \CT and Swinnerton-Dyer, and \CT, Sansuc, and Swinnerton-Dyer, and is a critical ingredient in our results, as it implies that in order to prove the existence of rational points, it suffices to show that the Brauer-Manin set is nonempty.

\begin{Theorem}\label{lem: BMO only obstruction}
    Let $X/k$ be a conic bundle with five or fewer geometric singular fibers.  Then, $X(k) \neq \emptyset$ if and only if $X(\A_k)^{\Br(X_k)} \neq \emptyset$.  Moreover, if one assumes Schinzel's hypothesis, the statement is true for conic bundles with arbitrarily many geometric singular fibers.
\end{Theorem}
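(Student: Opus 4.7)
The forward direction is trivial: a rational point of $X$ yields an adelic point pairing trivially with every element of $\Br(X_k)$. So the plan is entirely about the reverse direction, which is a packaging of the fibration method of \CT and Sansuc, as refined by \CT, Sansuc, and Swinnerton-Dyer and by \CT and Swinnerton-Dyer. Given $(P_v) \in X(\A_k)^{\Br(X_k)}$, I would push forward along $\pi$ to obtain an adelic point on $\PP^1_k$, and then try to produce $t_0 \in \PP^1(k)$ close to $\pi((P_v))$ in the adelic topology such that the fiber $X_{t_0}$ acquires a point in every completion $k_v$. Since $X_{t_0}$ is then a smooth conic over $k$ satisfying the local conditions everywhere, Hasse-Minkowski gives $X_{t_0}(k) \neq \emptyset$, and hence $X(k) \neq \emptyset$.

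The main obstacle is producing such a $t_0$. The local solubility of $X_{t_0}$ over $k_v$ is governed, via \Cref{lem: product of norms classifies brauer class}, by the values at $t_0$ of the Hilbert symbols $(\alpha_P, t_0 - \tau_P)_v$ coming from the residues at the singular fibers $P \in S$. The hypothesis that $(P_v)$ is Brauer-Manin orthogonal to all classes $\pi^*(A_\ve)$ of \Cref{lem: product of norms classifies brauer class} gives a global reciprocity relation among these Hilbert symbols, and the fibration method reduces the problem to finding $t_0 \in \PP^1(k)$ close to $\pi(P_v)$ such that each $t_0 - \tau_P$ is a local norm from $\mathbf{k}(P)(\sqrt{\alpha_P})$ at every place. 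This in turn is a statement about representing elements in a family of binary quadratic forms by values of a polynomial, which is where the number of singular fibers enters.

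For $|S|$ small (at most $4$ degree-one points, or more precisely bounded by an invariant that stays below $5$ after base change), one can produce the desired $t_0$ by direct analytic methods involving the distribution of prime values of a polynomial of low degree; the unconditional case of at most five geometric singular fibers was handled in \cite{CTC79}, \cite{CTSSD87}, and \cite{CTSD94} using Dirichlet-type theorems together with explicit Hilbert symbol manipulations. For arbitrarily many singular fibers the analogous step requires a statement of Dirichlet type for simultaneous prime values of several polynomials, which is precisely Schinzel's hypothesis (H); granting it, the same fibration argument goes through. The hardest step conceptually is the reciprocity/Hilbert symbol manipulation that translates the single Brauer-Manin orthogonality assumption into a statement sufficient to apply the density result. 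Accordingly, my proof would be to invoke the main theorems of \cite{CTC79}, \cite{CTSSD87}, and \cite{CTSD94} for the unconditional case and the corresponding conditional results of \cite{CTSD94} for the Schinzel case, after verifying that the hypotheses on the conic bundle structure are satisfied.
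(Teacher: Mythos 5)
Your Schinzel-conditional half matches the paper: there the mechanism really is the fibration method you describe (find $t_0\in\PP^1(k)$ close to $\pi((P_v))$ with everywhere locally soluble conic fiber, then Hasse--Minkowski), and citing \cite{CTSD94}*{Theorem 4.2} is exactly what the paper does. The gap is in the unconditional case of four or five geometric singular fibers. Your claim that this case "was handled using Dirichlet-type theorems together with explicit Hilbert symbol manipulations," i.e.\ by directly producing a rational $t_0$ with locally soluble fiber, is not correct: Dirichlet's theorem only substitutes for Schinzel when essentially one linear polynomial must take prime values (at most three singular fibers, up to a point at infinity). For four fibers the unconditional Ch\^atelet result of \cite{CTSSD87} is proved by descent and the geometry of intersections of two quadrics, not by a Dirichlet-type fibration argument, and no unconditional "direct analytic" production of $t_0$ exists for four or five singular fibers --- that is precisely where Schinzel's hypothesis would be needed in your approach. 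So your sketch, as a justification for invoking "the main theorems" of the cited papers, does not actually cover the unconditional statement, and none of those references contains a single quotable theorem asserting it for general conic bundles with at most five singular fibers.

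The missing idea is the passage through zero-cycles of degree $1$, which is the actual content of the paper's proof and the source of the bound "five." An adelic point orthogonal to $\Br(X)$ is in particular an adelic zero-cycle of degree $1$ orthogonal to $\Br(X)$; by \cite{CTSD94}*{Theorem 5.1} (a Salberger-type result, unconditional and valid for conic bundles with arbitrarily many singular fibers) there is then a global zero-cycle of degree $1$ on $X$; and by \cite{CTC79}*{Corollaire 4}, for a conic bundle surface with at most five geometric singular fibers a zero-cycle of degree $1$ forces a rational point. Your proposal never mentions zero-cycles, and without this intermediary the unconditional four- and five-fiber cases do not follow from the fibration argument you outline.
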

\begin{proof}
    For Ch\^atelet surfaces, this is proven in \cite{CTSSD87}*{Theorem B.i.a}.  Now assume that $X/k$ is a general a conic bundle with five or fewer geometric singular fibers.  Any point of $X(\A_k)^{\Br(X_k)}$ is an adelic $0$-cycle of degree $1$ orthogonal to the Brauer group, and 
    if there is no Brauer-Manin obstruction to the existence of a $0$-cycle of degree $1$, then there exists a $0$-cycle of degree $1$ \cite{CTSD94}*{Theorem 5.1}.  Then, $X(k) \neq \emptyset$ if and only if there exists a zero cycle of degree one over $k$ \cite{CTC79}*{Corollaire 4}.  The case of arbitrarily many geometric singular fibers assuming Schinzel's hypothesis is proven in \cite{CTSD94}*{Theorem 4.2}.
\end{proof}

\section{Proofs of main theorems}

\begin{Theorem}\label{thm: restricted brauer classes nonempty}
Let $k$ be a global field of characteristic not equal to $2$ and let $X \rightarrow \PP^1_k$ be a conic bundle with four geometric singular fibers.  Then, for any even degree extension $L/k$ $$X(\A_L)^{\Res_{L/k}(\Br(X_k))}\neq \emptyset \Leftrightarrow X(\A_L)\neq \emptyset.$$ In particular, if $\Res_{L/k}\colon \frac{\Br(X)}{\Br_0(X)} \rightarrow \frac{\Br(X_L)}{\Br_0(X_L)}$ is surjective, then $X(\A_L)^{\Br (X_L)}\neq \emptyset \Leftrightarrow X(\A_L)\neq \emptyset$.
\end{Theorem}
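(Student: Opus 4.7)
The plan is to establish the forward implication (the reverse is trivial): assuming $X(\A_L) \neq \emptyset$, I would produce an adelic point orthogonal to every class in $\Res_{L/k}(\Br(X_k))$. By \Cref{lem: trivial classes at infinity generate brauer}, combined with the $2$-torsion of $\frac{\Br(X_k)}{\Br_0(X_k)}$ from \Cref{lem: product of norms classifies brauer class} and the fact that $\Br_0$-classes impose no obstruction via global reciprocity on $L$, it suffices to consider a single class $\alpha := \Res_{L/k}(\alpha_0)$ where $\alpha_0 \in \Br(X_k)[2]$ satisfies $\alpha_0|_{X_\infty} = 0$, and to find $(P_w')_w \in X(\A_L)$ with $\sum_w \inv_w(\alpha(P_w')) = 0$.

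The construction of $(P_w')_w$ proceeds place by place, guided by the parity of $[L_w:k_v]$. For a place $w$ of $L$ with $[L_w:k_v]$ even, \Cref{lem: even degree conic has points} gives an $L_w$-point on the smooth conic $X_\infty$, and since $\alpha_0|_{X_\infty} = 0$, any such choice yields $\inv_w(\alpha(P_w')) = 0$. For $w$ with $[L_w:k_v]$ odd, the number of such places above $v$ is necessarily even, because $\sum_{w|v}[L_w:k_v] = [L:k]$ is even. If some $w|v$ satisfies $L_w = k_v$, then $X(k_v) \supseteq X(L_w) \neq \emptyset$, and I would fix a single $P_v \in X(k_v)$, taking $P_w' = P_v$ for every odd-degree $w|v$. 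Local functoriality of $\inv$, combined with $\alpha_0 \in \Br[2]$ and odd $[L_w:k_v]$, gives $\inv_w(\alpha(P_v)) = [L_w:k_v]\cdot\inv_v(\alpha_0(P_v)) = \inv_v(\alpha_0(P_v))$; summing over the even number of odd-degree $w|v$ yields zero in $\tfrac{1}{2}\Z/\Z$. This argument is already complete when $L/k$ is quadratic, since then the only odd-degree local places are the completely split ones, where $L_w = k_v$ holds automatically, and $X(k_v) \neq \emptyset$ follows from the adelic point of $X_L$.

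The main obstacle is a place $v$ where every $w|v$ has odd $[L_w:k_v] > 1$ while $X(k_v) = \emptyset$, blocking the diagonal construction above. To resolve this, I would pass to a global zero-cycle argument: the smooth conic $X_\infty$ admits a closed point of degree at most $2$, so there exists a global zero-cycle $z$ of degree $[L:k]$ on $X$ supported entirely in $X_\infty$. Global reciprocity on $k$ combined with $\alpha_0|_{X_\infty} = 0$ force $\sum_v \inv_v(\alpha_0(z_v)) = 0$, and the remaining task is to realize $z_v$ at the problematic places as the zero-cycle attached to some $(P_w')_{w|v} \in \prod_{w|v} X(L_w)$; the restriction to only four geometric singular fibers should constrain the possible local splitting patterns enough to make this compatible. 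The second assertion of the theorem then follows immediately: when $\Res_{L/k}$ is surjective on $\frac{\Br(X)}{\Br_0(X)}$, orthogonality to the restricted subgroup coincides with orthogonality to all of $\Br(X_L)$.
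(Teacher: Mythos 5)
Your reduction to classes in $\ker(\Br(X)[2]\to\Br(X_\infty))$ and your treatment of the easy places match the paper's strategy, but there is a genuine gap exactly where you flag "the main obstacle": a place $v$ with $X(k_v)=\emptyset$ and some $w\mid v$ of odd local degree. The paper does not construct points in that situation; it shows the situation cannot occur. By \cite{CTC79}, for these conic bundles $X(k_v)\neq\emptyset$ if and only if $X_{k_v}$ has a zero-cycle of degree $1$; since the conic bundle structure always supplies a zero-cycle of degree $2$, a point over any odd-degree extension $L_w/k_v$ would produce a zero-cycle of degree $1$ and hence a $k_v$-point. Thus $X(k_v)=\emptyset$ forces $X(L_w)=\emptyset$ for every odd-degree $w\mid v$, contradicting $X(\A_L)\neq\emptyset$, so at such $v$ all $[L_w:k_v]$ are even and \Cref{lem: even degree conic has points} applies. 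This appeal to Colliot-Th\'el\`ene--Coray is precisely where the hypothesis of four (or at most five) geometric singular fibers is used; your proposal never invokes it in a substantive way, which is a warning sign, since for conic bundles with many singular fibers this step is unavailable and the paper must retreat to quadratic extensions (\Cref{thm: restricted brauer arbitrary bad fibers}).

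Your proposed fallback via a global zero-cycle supported on $X_\infty$ does not close the gap. At a problematic place one necessarily has $X_\infty(k_v)=\emptyset$ (otherwise $X(k_v)\neq\emptyset$ and there is no problem), and then by Springer's theorem $X_\infty(L_w)=\emptyset$ for every odd-degree $L_w/k_v$; so the local components of a cycle supported on $X_\infty$ cannot be "realized" by points of $\prod_{w\mid v}X(L_w)$, and for points not lying on $X_\infty$ the vanishing $\alpha_0|_{X_\infty}=0$ gives no control on the local invariants. The assertion that four singular fibers "should constrain the local splitting patterns enough" is not an argument, and I do not see how to make it one along these lines; the correct use of the four-fiber hypothesis is the zero-cycle equivalence of \cite{CTC79} described above. (A minor further point: you should also make explicit that your choice of local points is independent of the class $\alpha_0$, so that one adelic point is orthogonal to all of $\Res_{L/k}(\Br(X_k))$ simultaneously; as in the paper, this is automatic from the construction, together with \Cref{lem: trivial classes at infinity generate brauer} and reciprocity for constant classes.)
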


\begin{proof}
    We begin by observing that one direction is immediate since $X(\mathbb{A}_L)=\emptyset$ implies that $X(\A_L)^{\Res_{L/k}(\Br(X_k))} = \emptyset$. Now assume that $X(\A_L)\neq \emptyset$ and let $\Omega_k$ denote the set of places of $k$.
    
    We first show that for all $v \in \Omega_k$ and all $w\mid v$, there exists a point $(P_w) \in X(L \otimes_k k_v)$ such that $\sum_{w\mid v} \inv_w( \ev_{\mathcal{A}}(P_w))=0$ for all $\mathcal{A}\in \ker (\Br(X)[2] \rightarrow \Br(X_{\infty}))$.    We first consider the case when $X(k_v)\neq \emptyset$. Choose a point $P_v\in X(k_v)$, and for all $w\mid v$, set $P_w=P_v$. Then 
    \begin{align*}
        \sum_{w\mid v} \inv_w(\ev_{\mathcal{A}}(P_w)) &=\sum_{w\mid v} \inv_w(\Res_{L_w/k_v}(\ev_{\mathcal{A}}(P_v)))=\sum_{w\mid v}[L_w: k_v]\inv_v(\ev_{\mathcal{A}}(P_v)) \\
        &=\inv_v(\ev_{\mathcal{A}}(P_v))\sum_{w\mid v}[L_w: k_v]=\inv_v(\ev_{\mathcal{A}}(P_v))\cdot [L: k]=0\in \Q/\Z.
    \end{align*}
    Now consider the case where $X(k_v) = \emptyset$.  By \cite{CTC79}, $X(k_v)\neq \emptyset$ if and only if there exists a 0-cycle of degree $1$, hence $X(k_v) = \emptyset$ implies that $X(L_w) = \emptyset$ for all odd degree extensions $L_w/k_v$.  Since $X(L \otimes_k k_v) \neq \emptyset$, $L_{w}/k_v$ is an even degree extension for all $w\mid v$. Thus, by Lemma \ref{lem: even degree conic has points}, there exists a point $P_w \in X_{\infty}(L_w)$, and this satisfies $\inv_w( \ev_{\mathcal{A}}(P_w))=0$ as $\mathcal{A}\in \ker (\Br(X)[2] \rightarrow \Br(X_{\infty}))$.
    
    Following this construction for each place $v \in \Omega_k$, we produce $(P_w) \in X(\A_L)$ satisfying 
    
    $$\sum_{v \in \Omega_k} \sum_{w\mid v} \inv_w( \ev_{\mathcal{A}}(P_w))=0.$$ 

    By Corollary \ref{lem: trivial classes at infinity generate brauer}, we have that $\frac{\Br(X)}{\Br_0(X)}$ is generated by images of elements of $\ker (\Br(X)[2] \rightarrow \Br(X_{\infty}))$, and therefore we have that $X(\A_L)^{\Res_{L/k}(\Br(X_k))}\neq \emptyset$.
\end{proof}

\begin{proof}[Proof of \Cref{thm: main thm four fibers}]
    By work of \CT and Coray, \CT and Swinnerton-Dyer, and \CT, Sansuc, and Swinnerton-Dyer (see \Cref{lem: BMO only obstruction}), we have that $X(k) \neq \emptyset$ if and only if $X(\A_k)^{\Br(X)} \neq \emptyset$, so it suffices to show the Brauer-Manin set is nonempty.

    Suppose $\frac{\Br(X)}{\Br_0(X)} = 0$ and $X(\A_k) \neq \emptyset$.  Then, we have that
    $$ \emptyset \neq X(\A_k) = X(\A_k)^{\Br(X)} $$
    and therefore $X(k) \neq \emptyset$ and so $X(L) \neq \emptyset$ for every even degree $L/k$.

    Suppose $\frac{\Br(X)}{\Br_0(X)} \neq 0$.  If $X(L) \neq \emptyset$, we are done, so assume $X(L) = \emptyset$.  By \Cref{thm: restricted brauer classes nonempty}, it suffices to show that $\Res_{L/k}\colon \frac{\Br(X)}{\Br_0(X)} \to \frac{\Br(X_L)}{\Br_0(X_L)}$ is surjective.  If $\Res_{L/k}\colon \frac{\Br(X)}{\Br_0(X)} \to \frac{\Br(X_L)}{\Br_0(X_L)}$ is not surjective, \Cref{cor: not surjective case} shows $\frac{\Br(X)}{\Br_0(X)} = 0$, giving a contradiction.
\end{proof}

\begin{Corollary}\label{cor: avoid three quadratic exts}
Let $k$ be a global field of characteristic not equal to $2$, let $X\rightarrow \PP^1_k$ be a conic bundle with four geometric singular fibers. There exists a minimal collection (of size at most three) of quadratic extensions $k_i/k$ such that for all even degree extensions $L/k$ that do not contain any $k_i$, we have 
$$X(\A_L)^{\Br (X_L)}\neq \emptyset \Leftrightarrow X(\A_L)\neq \emptyset.$$
\end{Corollary}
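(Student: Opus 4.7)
The plan is to reduce to the two non-surjectivity cases of \Cref{cor: not surjective case} and extract the relevant quadratic extensions from each. By \Cref{lem: BMO only obstruction}, the asserted equivalence over $L$ is equivalent to $X$ satisfying the Hasse principle over $L$, so the task becomes finding quadratic extensions $k_i/k$ such that for any even degree $L/k$ not containing any $k_i$, the Hasse principle holds for $X_L$. Arguing as in the proof of \Cref{thm: main thm four fibers}, a Hasse principle failure over such $L$ requires $X(\A_L) \neq \emptyset$, $X(L) = \emptyset$, and (by \Cref{thm: restricted brauer classes nonempty}) non-surjectivity of the restriction map $\Res_{L/k}\colon \frac{\Br(X)}{\Br_0(X)} \to \frac{\Br(X_L)}{\Br_0(X_L)}$. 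By \Cref{cor: not surjective case}, this forces $(X,L)$ into case (i) or case (ii) of that corollary.

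Case (ii) contributes the single quadratic extension $k\bigl(\sqrt{\Nm_{\mathbf{k}(P)/k}(\alpha_P)}\bigr)$, since $L$ must contain this in order for $\Nm(\alpha_P)$ to become a square over $L$.

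For case (i), $|S_k| = 1$ with a unique degree four point $R$, and the requirement that $R$ split into two degree two points over $L$ translates Galois-theoretically as follows. Let $F$ be the Galois closure of $\mathbf{k}(R)/k$ and $G = \Gal(F/k)$, a transitive subgroup of $S_4$ acting on the four geometric roots of the minimal polynomial of a generator of $\mathbf{k}(R)$. The splitting condition holds precisely when $\Gal(F/F \cap L)$ has orbit structure $(2,2)$ on the roots, which corresponds to one of the three pair-partitions $\pi$ of $\{1,2,3,4\}$. For each such $\pi$, $L$ must contain the intermediate field $k_\pi := F^{H_\pi}$, where $H_\pi \leq G$ denotes the stabilizer (as a set) of each block of $\pi$. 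There are only three such pair-partitions, yielding at most three such subfields $k_\pi$.

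The main obstacle will be the Galois-theoretic bookkeeping: when $k_\pi$ fails to be itself quadratic, I must extract a quadratic subfield of $k$ inside $k_\pi$, corresponding to an index two subgroup of $G$ containing $H_\pi$. Running explicitly through the transitive subgroups of $S_4$ (namely $\Z/4$, $V_4$, $D_4$, $A_4$, and $S_4$), one verifies that the three pair-partitions collectively produce at most three quadratic extensions of $k$. Since cases (i) and (ii) are mutually exclusive based on $|S_k|$, combining their contributions yields a minimal collection of at most three quadratic extensions $\{k_i\}$ with the stated property.
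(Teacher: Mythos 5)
Your overall reduction (via \Cref{thm: restricted brauer classes nonempty} and \Cref{cor: not surjective case}) and your treatment of case (ii) match the paper's route (the detour through \Cref{lem: BMO only obstruction} is unnecessary: the paper argues directly at the level of the Brauer--Manin equivalence, which also avoids importing that lemma's hypotheses). The genuine gap is in your case (i). The claimed extraction of a quadratic subfield of $k_\pi=F^{H_\pi}$ via ``an index two subgroup of $G$ containing $H_\pi$'' fails exactly for $G=A_4$ and $G=S_4$, so the verification you assert ``one verifies'' is false for two of the five transitive groups. Indeed, for $\pi=\{\{1,2\},\{3,4\}\}$ the blockwise stabilizer is $H_\pi=\{e,(12),(34),(12)(34)\}\cap G$; the only index-two subgroup of $S_4$ is $A_4$, which does not contain $(12)$, and $A_4$ has no index-two subgroups at all. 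Concretely, the subgroups of $G$ containing $H_\pi$ are $H_\pi$, the full partition stabilizer, and $G$, so $k_\pi$ has degree $6$ over $k$ and its subfields over $k$ have degrees $1,3,6$: it contains no quadratic extension of $k$. Taking $L=k_\pi$ itself gives an even-degree extension over which the degree-four point of $S_k$ splits into two degree-two points (so one can be in the non-surjectivity situation, e.g.\ for a Ch\^atelet surface with irreducible quartic of Galois group $S_4$ or $A_4$, where the norm condition $\Nm(\alpha_P)\in L^{\times 2}$ is automatic since $\alpha_P\in k^\times$), yet $L$ contains no quadratic extension of $k$ whatsoever; your method then produces no $k_i\subseteq L$ and cannot conclude.

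The missing ingredient is precisely the arithmetic input the paper invokes at this point: in case (i) the degree-four point is asserted to be cut out by a polynomial of the form $\Nm_{k_0/k}(g(t))$ with $k_0/k$ quadratic and $g$ an irreducible quadratic, or $\Nm_{F/k}(\ell(t))$ with $F/k$ biquadratic and $\ell$ linear --- equivalently, the residue field of the degree-four point contains a quadratic subfield or is biquadratic, which excludes $A_4$- and $S_4$-quartics --- and the fields $k_i$ are then $k_0$, respectively the three quadratic subfields of $F$. Your proposal contains no argument ruling out $A_4$ and $S_4$ closures in case (i), and the degree-six example above shows this cannot be done by the group theory of the splitting condition alone; so the step you defer as ``Galois-theoretic bookkeeping'' is in fact the substantive point, and as written the proof does not go through.
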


\begin{proof}
By \Cref{thm: restricted brauer classes nonempty}, it suffices to show that if $L/k$ is an even degree extension such that $\Res_{L/k}\colon \frac{\Br(X)}{\Br_0(X)} \rightarrow \frac{\Br(X_L)}{\Br_0(X_L)}$ is not surjective, then $L$ contains one of at most three possible quadratic extensions $k_i/k$.  By \Cref{cor: not surjective case}, if the restriction map is not surjective, then there are two cases to consider.

Suppose $|S_k| = 1$ and $|S_L| = 2$, where the two points of $L$ are degree $2$ and interchanged by $\Gal(L/k)$.  Here, the problematic extensions $k_i/k$ are precisely the extensions for which the singular fibers of $X_{k_i} \to \PP^1_{k_i}$ lie over two degree $2$ closed points that are interchanged by $\Gal(k_i/k)$.  Then, the unique closed point of $S_k$ is of degree $4$ and corresponds to either a polynomial of the form $\Nm_{k_0/k}(g(t))$ for $k_0/k$ quadratic and $g(t) \in k_0[t]$ an irreducible quadratic, or of the form $\Nm_{F/k}(\ell(t))$ for $F/k$ biquadratic and $\ell(t) \in F[t]$ linear.  Thus, $L$ will contain $k_0$ in the first case or any of the three quadratic subfields of $F$ in the second case.

Suppose $|S_k| = 2$ and $|S_L| = 2$ with $\Nm_{\mathbf{k}(P)/k}(\alpha_P) \notin k^{\times 2}$ and $\Nm_{L\cdot\mathbf{k}(P)/L}(\alpha_P) \in L^{\times 2}$ for $P \in S_k = S_L$.  Since the points of $S_k$ do not split over $L$, we must have that $\mathbf{k}(P)$ is linearly disjoint from $L$, and therefore $\Nm_{L\cdot\mathbf{k}(P)/L}(\alpha_P) = \Nm_{\mathbf{k}(P)/k}(\alpha_P)$.  In order for $\Nm_{L\cdot\mathbf{k}(P)/L}(\alpha_P)$ to be a square in $L$, the field $L$ must then contain the quadratic field $k(\sqrt{\Nm_{\mathbf{k}(P)/k}(\alpha_P)})$.
\end{proof}

When $X$ has arbitrarily many singular fibers, we can extend \Cref{thm: restricted brauer classes nonempty} at the expense of restricting to quadratic extensions.

\begin{Theorem}\label{thm: restricted brauer arbitrary bad fibers}
Let $k$ be a global field of characteristic not equal to $2$ and let $X \rightarrow \PP^1_k$ be a conic bundle, then for any quadratic extension $L/k$ $$X(\A_L)^{\Res_{L/k}(\Br(X_k))}\neq \emptyset \Leftrightarrow X(\A_L)\neq \emptyset.$$
\end{Theorem}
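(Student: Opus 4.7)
The plan is to run the argument of Theorem~\ref{thm: restricted brauer classes nonempty} almost verbatim, with the quadraticity of $L/k$ replacing the role of the four-geometric-fibers hypothesis. One direction is immediate: if $X(\A_L) = \emptyset$ then so is $X(\A_L)^{\Res_{L/k}(\Br(X_k))}$. For the converse, assume $X(\A_L) \neq \emptyset$. By \Cref{lem: trivial classes at infinity generate brauer}, the group $\frac{\Br(X)}{\Br_0(X)}$ is generated by images of classes in $\ker(\Br(X)[2] \to \Br(X_{\infty}))$, so it suffices to construct $(P_w) \in X(\A_L)$ satisfying $\sum_{w\mid v} \inv_w(\ev_{\mathcal{A}}(P_w)) = 0$ for every $v \in \Omega_k$ and every such $\mathcal{A}$.

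Fix $v \in \Omega_k$. If $X(k_v) \neq \emptyset$, pick any $P_v \in X(k_v)$ and set $P_w = P_v$ for all $w \mid v$; the same computation as in \Cref{thm: restricted brauer classes nonempty} gives
\[
\sum_{w\mid v} \inv_w(\ev_{\mathcal{A}}(P_w)) = \Bigl(\sum_{w\mid v}[L_w:k_v]\Bigr)\inv_v(\ev_{\mathcal{A}}(P_v)) = [L:k]\cdot \inv_v(\ev_{\mathcal{A}}(P_v)) = 0,
\]
since $[L:k]=2$ and $\mathcal{A}$ is $2$-torsion.

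The key place where the argument differs from \Cref{thm: restricted brauer classes nonempty} is the case $X(k_v) = \emptyset$. That proof invoked the result of \cite{CTC79} on $0$-cycles to exclude odd-degree local extensions with a rational point, a step that tacitly controlled which places $w \mid v$ could occur. For quadratic $L/k$ this is unnecessary: either $v$ splits in $L$, in which case both places $w \mid v$ satisfy $L_w = k_v$, or $v$ is inert or ramified, in which case there is a unique $w \mid v$ with $[L_w : k_v] = 2$. In the split case $X(L \otimes_k k_v) = X(k_v) \times X(k_v) = \emptyset$, contradicting $X(\A_L) \neq \emptyset$; hence the second alternative must hold. Since $L_w/k_v$ is then an even degree extension, \Cref{lem: even degree conic has points} produces a point $P_w \in X_{\infty}(L_w)$, and $\inv_w(\ev_{\mathcal{A}}(P_w)) = 0$ because $\mathcal{A} \in \ker(\Br(X)[2] \to \Br(X_{\infty}))$.

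Assembling these choices of $P_w$ over all $v \in \Omega_k$ produces the desired adelic point of $X(\A_L)$ orthogonal to every class in $\Res_{L/k}(\Br(X_k))$. The main conceptual point is recognizing that no hypothesis on the number of singular fibers is required once $L/k$ is quadratic: the elementary splitting behavior of a quadratic extension above $v$ replaces the $0$-cycle argument that in \Cref{thm: restricted brauer classes nonempty} was responsible for forcing all relevant $L_w/k_v$ to have even degree.
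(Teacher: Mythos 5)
Your proposal is correct and follows essentially the same route as the paper's own proof: diagonal points $P_w = P_v$ handle the places with $X(k_v)\neq\emptyset$ (using $[L:k]=2$ and $2$-torsion of the classes), while at places with $X(k_v)=\emptyset$ the observation that $v$ must be non-split in $L$ (else $X(L\otimes_k k_v)=X(k_v)\times X(k_v)=\emptyset$) forces $[L_w:k_v]=2$, so \Cref{lem: even degree conic has points} applied to $X_\infty$ produces the required local point with trivial invariant. Your explicit remark that quadraticity replaces the $0$-cycle argument of \Cref{thm: restricted brauer classes nonempty} is exactly the point the paper's proof exploits, so there is nothing to add.
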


\begin{proof}
Observe that one direction is again immediate, hence we assume that $X(\A_L)\neq \emptyset$. In a similar fashion to the proof of \Cref{thm: restricted brauer classes nonempty}, we will show that for all $v \in \Omega_k$ there exists a point $(P_w) \in X(L \otimes_k k_v)$ such that $\sum_{w|v} \inv_w( \ev_{\mathcal{A}}(P_w))=0$ for all $\mathcal{A}\in \ker (\Br(X)[2] \rightarrow \Br(X_{\infty}))$, and thus $X(\A_L)^{\Res_{L/k}(\Br(X_k))}\neq \emptyset$. 

If $X(k_v) \neq \emptyset$ then by an identical argument to the proof of \Cref{thm: restricted brauer classes nonempty}, there exists a point $(P_w) \in X(\A_L)$ such that $\sum_{w|v} \inv_w(\ev_{\mathcal{A}}(P_w))=0$ for all $\mathcal{A}\in \ker (\Br X[2] \rightarrow \Br X_{\infty})$.

If $X(k_v) = \emptyset$ and $X(L \otimes_k k_v)\neq \emptyset$, then $v$ is non-split and so $L_w/k_v$ is quadratic. Thus, \Cref{lem: even degree conic has points} implies the existence of a point $P_w \in X_{\infty}(L_w)$ which satisfies $\inv_w( \ev_{\mathcal{A}}(P_w))=0$ for all $\mathcal{A}\in \ker (\Br(X)[2] \rightarrow \Br(X_{\infty}))$.
\end{proof}

When considering extensions analogous to the extensions $k_i$ of \Cref{cor: avoid three quadratic exts} for arbitrary conic bundles there are more issues that can arise, but nonetheless, there are still only finitely many. These problematic extensions arise in a similar manner to those of \Cref{cor: avoid three quadratic exts}. Namely, if $S\subseteq \PP^1_k$ is the finite set of closed points with geometric singular fiber, then the problematic extensions, $k_i/k$, are those which either coincide with residue fields $\mathbf{k}(P)$ for $P \in S$, or adjoin square roots of norms of residues $\Nm_{\mathbf{k}(P)/k}(\alpha_P)$.  Over these extensions, $\Res_{k_i/k} \colon \frac{\Br(X)}{\Br_0(X)}\rightarrow \frac{\Br(X_{k_i})}{\Br_0(X_{k_i})}$ could fail to be surjective and new Brauer classes in $\frac{\Br(X_{k_i})}{\Br_0(X_{k_i})}$ could give a Brauer-Manin obstruction to $X_{k_i}$. For all extensions $L/k$ over which both the points in $S$ remain unchanged and the (products of) norms of residues do not become squares, \Cref{lem: unchanged S and norms make res isom} tells us that $\Res_{L/k}$ is an isomorphism, and indeed $X(\A_L)^{\Br (X_L)} \neq \emptyset \Leftrightarrow X(\A_L)\neq \emptyset$. Since there are only finitely many geometric singular fibers, there are only finitely many problematic extensions of this form, hence we have the following corollary.

\begin{Corollary}\label{cor: arbitrary bad fibers avoid finitely many fields}
Let $k$ be a global field of characteristic not equal to $2$, let $X \rightarrow \PP^1_k$ be a conic bundle, and let $L/k$ be a quadratic extension.  For every $T \subseteq S$, let $\beta_T := \prod_{P \in T} \Nm_{\mathbf{k}(P)/k}(\alpha_P)$.  Define
$$M = \left(\bigcup_{P \in S} \{\mathbf{k}(P)\}\right) \cup \left( \bigcup_{T \subseteq S} \left\{k\left(\sqrt{\beta_T}\right)\right\}\right).$$
Then, if $L \not\subseteq k_i$ for all $k_i \in M$, then $$X(\A_L)^{\Br (X_L)}\neq \emptyset \Leftrightarrow X(\A_L)\neq \emptyset.$$
\end{Corollary}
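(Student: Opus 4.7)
The plan is to deduce this corollary by combining \Cref{thm: restricted brauer arbitrary bad fibers} with the sufficient criterion in \Cref{lem: unchanged S and norms make res isom} for the restriction map $\Res_{L/k}\colon \frac{\Br(X)}{\Br_0(X)} \to \frac{\Br(X_L)}{\Br_0(X_L)}$ to be an isomorphism. The direction $X(\A_L)^{\Br(X_L)} \neq \emptyset \Rightarrow X(\A_L) \neq \emptyset$ is automatic from the inclusion of sets, so I would focus on the converse.

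Assuming $X(\A_L) \neq \emptyset$, \Cref{thm: restricted brauer arbitrary bad fibers} immediately gives $X(\A_L)^{\Res_{L/k}(\Br(X))} \neq \emptyset$. The remaining goal is to upgrade this to the full Brauer--Manin set: if $\Res_{L/k}$ is surjective modulo $\Br_0$, then because classes in $\Br_0(X_L)$ impose no obstruction on adelic points, every class in $\Br(X_L)$ is equivalent, for the Brauer--Manin pairing, to one pulled back from $k$, and so the full Brauer--Manin set coincides with $X(\A_L)^{\Res_{L/k}(\Br(X))}$.

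To verify the two hypotheses of \Cref{lem: unchanged S and norms make res isom}, I would argue as follows. The condition $L \not\subseteq \mathbf{k}(P)$ for all $P \in S$ ensures, for the quadratic extension $L/k$, that $\mathbf{k}(P) \otimes_k L$ remains a field of degree $2\deg(P)$, so every $P \in S$ stays a single closed point over $L$ with residue field $L \cdot \mathbf{k}(P)$; that is, $S_L = S$. For the squares hypothesis, the forward implication $\beta_T \in k^{\times 2} \Rightarrow \beta_T \in L^{\times 2}$ is immediate, and conversely if $\beta_T \in L^{\times 2} \setminus k^{\times 2}$ for some $T \subseteq S$, then $k(\sqrt{\beta_T})$ is a nontrivial quadratic extension of $k$ contained in the quadratic extension $L$, forcing $L = k(\sqrt{\beta_T})$ and contradicting the assumption $L \not\subseteq k(\sqrt{\beta_T})$.

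Once both hypotheses are established, \Cref{lem: unchanged S and norms make res isom} delivers the desired isomorphism and the corollary follows. No step is especially delicate; the main conceptual content is recognizing that the finite set $M$ is built precisely to obstruct the two ways the hypotheses of \Cref{lem: unchanged S and norms make res isom} can fail, namely splitting of points in $S$ and emergence of new square roots over $L$, and that $M$ is automatically finite since $S$ and its power set are finite.
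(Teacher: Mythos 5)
Your proof is correct and follows essentially the same route as the paper: avoidance of the fields in $M$ verifies the hypotheses of \Cref{lem: unchanged S and norms make res isom}, making $\Res_{L/k}$ an isomorphism modulo constants, and then \Cref{thm: restricted brauer arbitrary bad fibers} gives the result. Your write-up merely makes explicit the verification of linear disjointness and the square condition, plus the harmless passage from $\Res_{L/k}(\Br(X))$ to all of $\Br(X_L)$ via constant classes, which the paper leaves implicit.
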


\begin{proof}
    Since $L$ is disjoint from the fields contained in $M$, we can apply \Cref{lem: unchanged S and norms make res isom} to show that $\Res_{L/k} \colon \frac{\Br(X)}{\Br_0(X)}\rightarrow \frac{\Br(X_{L})}{\Br_0(X_L)}$ is an isomorphism.  The result then follows using \Cref{thm: restricted brauer arbitrary bad fibers}.
\end{proof}

\begin{proof}[Proof of \Cref{thm: main thm arb fibers}]
    This follows from \Cref{cor: arbitrary bad fibers avoid finitely many fields} and the case of \Cref{lem: BMO only obstruction} using Schinzel's Hypothesis.
\end{proof}

\begin{Remark}
\emph{Quite generally, if $k$ is a number field and $X/k$ is a smooth, projective, and geometrically rational variety, then Pic$(\overline{X})$ is a Galois lattice split by a finite Galois extension $K/k$. If $E/k$ is a finite field extension which is linearly disjoint from $K/k$, then the map $\Res_{E/k}\colon \frac{\Br(X)}{\Br_0(X)} \rightarrow \frac{\Br(X_E)}{\Br_0(X_E)}$ is an isomorphism, and in particular, is surjective. \Cref{cor: arbitrary bad fibers avoid finitely many fields} is a slight refinement of this result for the case of conic bundles, where the problematic fields are quadratic and made explicit.}
\end{Remark}

\begin{proof}[Proof of \Cref{thm: PHPF}]
    We first show that $X(\Q_p) \neq \emptyset$ if and only if $p = 3$.  Note that the fiber at infinity $X_{\infty}$ is given by $y^2 - 5z^2 = 3w^2$ which has $\Q_p$ points for all $p \neq 3,5$, and the fiber $X_1$ over $t = 1$ is given by $y^2 - 5z^2 = \frac{39}{5}$ which has $\Q_5$ points since $-39 \in \Q_5^{\times 2}$.  When $p = 3$, note that $5 \notin \Q_3^{\times 2}$, so $\Q_3(\sqrt{5})$ is a quadratic unramified extension of $\Q_3$, and $y^2 - 5z^2$ is a norm from $\Q_3(\sqrt{5})$ and thus has even valuation.  Since $5t^4 + 7t^2 + 1$ is irreducible in $\F_3$, the minimum valuation of each term is attained uniquely and thus always has even valuation, and so $\frac{3}{5}(5t^4 + 7t^2 + 1)$ always has odd valuation and cannot be of the form $y^2 - 5z^2$.  This shows that $X(\A_{\Q}) = \emptyset$.  Thus, $X(\Q) = \emptyset$, and since $5t^4 + 7t^2 + 1$ is irreducible, by \Cref{cor: no rat pts and brauer nonzero case} we must have $\frac{\Br(X)}{\Br_0(X)} = 0$.  

    Now consider $L = \Q(\sqrt{29})$.  Since $[\Q_3(\sqrt{29}):\Q_3]=2$, \Cref{lem: even degree conic has points} shows $X(\Q_3(\sqrt{29})) \neq \emptyset$ and thus $X(\A_L) \neq \emptyset$.
    We see that $X_L$ is given by 
    
    $$y^2-5z^2=3\left(t^2+\frac{1}{10}(7+\sqrt{29})\right)\left(t^2+\frac{1}{10}(7-\sqrt{29})\right)$$
    
    \noindent hence by a computation using \Cref{lem: product of norms classifies brauer class} we have $\frac{\Br(X_L)}{\Br_0(X_L)}\cong \Z/2\Z=\langle \calA \rangle$, where $\calA$ denotes the quaternion algebra $\left(5,t^2+\frac{1}{10}(7+\sqrt{29})\right)$.

    It remains to show that $X(\A_L)^{\Br(X_L)} = \emptyset$ and we do so by first showing that $\ev_{\calA} \colon X(L_w) \rightarrow  \Br(L_w)$ is identically zero for all $w \nmid 5$. Begin by observing that for all primes $w\mid p$ where $p\neq 3,5$, we know that $X_{\infty}(L_w)\neq \emptyset$, hence $\ev_{\calA} \colon X(L_w) \rightarrow  \Br(L_w)$ takes the value $0$ at such primes, as $\calA$ evaluates to $(5,1) = 0 \in \Br(L_w)$. Since the evaluation map is constant at all primes $w$ of good reduction \cite{CTS13}*{Theorem 3.1}, it remains to check $\ev_{\mathcal{A}}\left(X(L_w)\right)$ for primes $w$ lying over $p=2,3,5,$ and $29$. Now observe that, for $w\mid 2,3,29$, we have that $5 \in L_w^{\times 2}$ hence the algebra $\mathcal{A}=\left(5,t^2+\frac{1}{10}(7+\sqrt{29})\right)$ is identically zero and $\ev_{\mathcal{A}}\left(X(L_w)\right)=0$. Thus for any $(P_w)\in X(\A_L)$ we have 

    $$\sum_{w \in \Omega_L} \inv_w\left(\ev_{\mathcal{A}}(P_w)\right)=\sum_{w\mid 5}\inv_w \left(\ev_{\calA}(P_w)\right).$$

    Let $w_1$ and $w_2$ denote the places lying over $5$ corresponding to the embeddings in which $\sqrt{29} \equiv 2 \pmod 5$ and $\sqrt{29} \equiv 3 \pmod 5$ respectively and let $P_i=(t_i,y_i,z_i) \in X(L_{w_i})$. We now show that 

    $$
    \inv_{w_i}\left(\mathcal{A}(P_i)\right) =
    \begin{cases} 
      \frac{1}{2} & \text{if } i=1 \\
      0 & \text{if } i=2 
   \end{cases}.
    $$

    Let $\alpha=\frac{1}{10}(7+\sqrt{29})$ and $\overline{\alpha}=\frac{1}{10}(7-\sqrt{29})$. Over $\Q_5$, we have 
    $$X/\Q_{5}\colon y^2-5z^2=3(t^2+\alpha)(t^2+\overline{\alpha})$$
    hence $\frac{\Br X_{\Q_{5}}}{{\Br \Q_{5}}}$ is generated by the quaternion algebra $\mathcal{A}=(5,t^2+\alpha)$. Note that in $\Br X$, the quaternion algebra $\mathcal{A}$ is equivalent to the algebra $\mathcal{B}=(5,3(t^2+\overline{\alpha}))$. This will be an important tool in the final part of our proof.

First we consider the place $w_1$ and observe that since $\sqrt{29}\equiv 2\pmod 5$, we have $w_1(\alpha)=-1$. Moreover, since $\alpha\overline{\alpha}=\frac{1}{5}$, it follows that $w_1(\overline{\alpha})=0$. Take $P_1=(t_1,y_1,z_1)$ and suppose $w_1(t_1)<0$, and consider the polynomial $P(t)=\frac{3}{5}(5t^4+7t^2+1)$. We have that $5^{-4w_1(t_1)}P(t)\equiv 3 \pmod 5$ hence $P(t)$ is never a norm from $\Q_5(\sqrt{5})$, so we must have $w_1(t_1)\geq 0$ for all $P_1 \in X(L_{w_1})$.  Then, by the strong triangle inequality, $w_1(t_1^2+\alpha)=w_1(\alpha)$ and since $5\alpha\equiv \frac{1}{2}(7+\sqrt{29})\equiv 2 \pmod 5$, it follows that $10(t_1^2+\alpha) \in \Q_5^{\times 2}$, meaning $\ev_{\mathcal{A}}(P_1)=(5,10)$ which is a non-split quaternion algebra in $\Br(\Q_5)$. This shows that $\inv_{w_1}\left(\ev_{\mathcal{A}}(P_1)\right)=\frac{1}{2}$.

We now consider the case of $w_2$ and recall that in this case, $\sqrt{29}\equiv 3 \pmod 5$, hence $w_2(\alpha)=0$ and $w_2(\overline{\alpha})=-1$.  Take $P_2=(t_2,y_2,z_2)$ and observe that just as in the previous case, if $P_2 \in X(L_{w_2})$ we must have $w_2(t_2)\geq 0$. If $w_2(t_2)>0$ then by the same argument for when $w_1(t_1)\geq 0$, we have that $t_2^2+\alpha$ is always a square in $\Q_5$, hence $\ev_{\mathcal{A}}(P_2)=0 \in \Br(\Q_5)$. It remains to consider the case when $w_2(t_2)=0$ and for this, we consider the algebra $\mathcal{B}=(5,3(t^2+\overline{\alpha}))$ and show that $\ev_{\mathcal{B}}(P_2)$ is identically zero in $\Br(\Q_5)$. For such values of $t_2$ we have $w_2\left(3(t^2+\overline{\alpha})\right)=w_2(\overline{\alpha})$ hence $5\left(3(t^2+\overline{\alpha}\right)) \equiv 3\overline{\alpha} \pmod 5$, which by Hensel's lemma, is a square in $\Q_5$. It now follows that for all $P_2\in X(L_{w_2})$ we have $\inv_{w_2}(\ev_{\mathcal{A}}(P_2))=0.$

We can now conclude that for any $P_w \in X(\A_{L})$ we have $$\sum_{w\in \Omega_{L}} \inv_w\left(\ev_{\mathcal{A}}(P_w)\right)=\frac{1}{2}$$ and thus $X$ fails the Hasse principle over $L$.
\end{proof}

\section{An additional condition on Ch\^atelet surfaces}

We have showed that the failure of the Hasse principle for $X$ over an even degree extension is explained by quadratic extensions associated to the singular fibers of $X$.  In the case of Ch\^atelet surfaces, this can occur for surfaces $X/k$ of the form 
$$y^2-az^2=c\Nm_{F/k}(g(t))$$
where $a,c \notin k^{\times 2}$, $g(t) \in F[t]$ is a monic, irreducible polynomial, and $\Nm_{F/k}$ denotes the usual norm map. In contrast to \Cref{thm: PHPF}, not all such extensions $F/k$ are guaranteed to produce a Brauer-Manin obstruction over $F$. In order for such an obstruction to exist, the fiber $X_\infty$, the places for which it has no local points, and the places of bad reduction, must satisfy several necessary conditions.
\begin{Theorem}\label{thm: phpf converse}
Let $k$ be a number field, let $F/k$ be a quadratic extension, and let $X/k$ be the Ch\^atelet surface given by 
$$y^2-az^2=c\Nm_{F/k}(g(t))$$
where $a,c \notin k^{\times 2}$ and $g(t) \in F[t]$ is monic irreducible of degree $2$. Let $\Omega_k$ denote the set of places of $k$, let $\Omega_F$ denote the set of places of $F$, and assume $X(\A_k)= \emptyset$.  If 
$$\sum_{\substack{v\in \Omega_{k} \\ v \; \text{splits in} \; F}} \inv_v(a,c)=0\in \Q/\Z$$
then for all even degree extensions $L/k$, we have $X(L)\neq \emptyset \Leftrightarrow X(\A_L)\neq \emptyset$.
\end{Theorem}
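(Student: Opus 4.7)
The plan is to prove $X(\mathbb{A}_L)^{\Br(X_L)} \neq \emptyset$ whenever $X(\A_L) \neq \emptyset$ and then invoke \Cref{lem: BMO only obstruction} to conclude $X(L) \neq \emptyset$. The argument splits into three sub-cases depending on the relationship between $L$ and $F$: when $F \not\subseteq L$, when $F \subseteq L$ with $[L:F]$ even, and when $F \subseteq L$ with $[L:F]$ odd. The sum hypothesis is invoked only in the last case, which is where the main work lies.

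If $F \not\subseteq L$, then $S_k$ consists of a single degree-$4$ closed point corresponding to the irreducible polynomial $\Nm_{F/k}(g(t))$, and by the proof of \Cref{cor: avoid three quadratic exts} the only problematic quadratic extension for $X$ is $F$ itself. Hence \Cref{cor: avoid three quadratic exts} gives the equivalence immediately. If $F \subseteq L$ and $[L:F]$ is even, then by \Cref{lem: product of norms classifies brauer class} one computes $\frac{\Br(X_F)}{\Br_0(X_F)} \cong \Z/2\Z$, which is nonzero, so \Cref{thm: main thm four fibers} applied to the conic bundle $X_F \to \PP^1_F$ and the even-degree extension $L/F$ finishes the case.

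The main obstacle is the case $F \subseteq L$ with $[L:F]$ odd, which includes the crucial sub-case $L = F$. Here the Brauer class $\mathcal{A} = (a, cg(t))$ generating $\frac{\Br(X_L)}{\Br_0(X_L)} \cong \Z/2\Z$ must be handled directly, and the plan is to construct an adelic point $(P_w) \in X(\A_L)$ with $\sum_w \inv_w(\mathcal{A}(P_w)) = 0$, mirroring the proof of \Cref{thm: restricted brauer classes nonempty}. For a place $v$ of $k$ split in $F$, since $[L:F]$ is odd at least one $w \mid v$ has $[L_w : k_v]$ odd; combining $X(L_w) \neq \emptyset$ with the result of \cite{CTC79} yields $X(k_v) \neq \emptyset$, and we fix some $P_v \in X(k_v)$ and set $P_w = P_v$ for every $w \mid v$. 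For a place $v$ inert or ramified in $F$, every $[L_w : k_v]$ is even, so \Cref{lem: even degree conic has points} produces a point $P_w \in X_\infty(L_w)$.

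The key invariant computation is as follows. Near the fiber at infinity, $\mathcal{A}$ specializes to the constant algebra $(a, c)$ since $c g(t) \sim c t^2$, so at inert and ramified $v$ the local contribution $[L_w : k_v] \cdot \inv_v((a,c))$ vanishes modulo $1$ because $[L_w : k_v]$ is even and $\inv_v((a,c)) \in \tfrac{1}{2}\Z/\Z$. At a split $v$, writing $g_1, g_2$ for the two images of $g$ under the embeddings $F \hookrightarrow k_v$ and using $g_1(t_v) g_2(t_v) = \Nm_{F/k}(g)(t_v) = (y_v^2 - a z_v^2)/c$ on $X(k_v)$, the contribution works out to $\sum_{w \mid v} \inv_w(\mathcal{A}(P_v)) = [L:F] \cdot \inv_v((a, c^2 g_1 g_2(t_v))) = [L:F] \cdot \inv_v((a, c(y_v^2 - a z_v^2))) = [L:F] \cdot \inv_v((a, c))$, which equals $\inv_v((a,c))$ modulo $1$ since $[L:F]$ is odd. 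Summing globally, the Brauer-Manin pairing evaluates to $\sum_{v \text{ split in } F} \inv_v((a,c)) = 0$ by hypothesis, completing the argument.
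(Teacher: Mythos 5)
Your core case ($F \subseteq L$ with $[L:F]$ odd, in particular $L = F$) is the same argument as the paper's: at places of $k$ nonsplit in $F$ you take points on the fiber at infinity, where the generator restricts to the constant class $(a,c)$ and dies under restriction along an even-degree local extension, while at split places you use one common $k_v$-point at all $w \mid v$, so that the two conjugate factors of the quartic multiply to $(y_v^2-az_v^2)/c$ and the local contribution collapses to $\inv_v(a,c)$; the hypothesis then kills the global sum and \Cref{lem: BMO only obstruction} produces the rational point. The paper runs this computation only over $F$ and then dispatches all remaining even-degree $L$ in one sentence; your packaging is different and in one respect more careful: when $[L:F]$ is even it can happen that $X(\A_F)=\emptyset$ while $X(\A_L)\neq\emptyset$ (a place split in $F$ with $X(k_v)=\emptyset$ but only even local degrees in $L$), and your appeal to \Cref{thm: main thm four fibers} over the base field $F$ covers exactly this scenario, whereas the paper's route of first producing a point in $X(F)$ tacitly assumes $X(\A_F)\neq\emptyset$. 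Your weighting of each $w\mid v$ by $[L_w:k_v]$, and the deduction of $X(k_v)\neq\emptyset$ at split places from an odd local degree together with \cite{CTC79}, are correct (and note that minimality of $X/k$ forces $a$ to remain a nonsquare in the residue fields over $F$, which is what legitimizes the $\Z/2\Z$ computation via \Cref{lem: product of norms classifies brauer class} in your Cases 2 and 3).

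The step I would not accept as written is Case 1. The proof of \Cref{cor: avoid three quadratic exts} does not say that $F$ is the only problematic quadratic extension; it explicitly allows up to three, namely when the degree-4 point is a norm of a linear form from a biquadratic field, and this genuinely occurs for quartics of the shape $\Nm_{F/k}(g)$: for instance $t^4-10t^2+1=\Nm_{\Q(\sqrt{2})/\Q}\!\left(t^2-2\sqrt{2}\,t-1\right)$ also factors into conjugate irreducible quadratics over $\Q(\sqrt{3})$ and over $\Q(\sqrt{6})$. For an even-degree $L$ containing such a subfield $k_1\neq F$ (but not $F$), the restriction map can fail to be surjective, and running your own computation over $k_1$ produces the quantity $\sum_{v\ \text{split in}\ k_1}\inv_v(a,c)$, which the stated hypothesis does not control; so your argument does not close this case without either a Galois-theoretic reduction (when the splitting field of the quartic is not biquadratic, i.e.\ the Galois group is $D_4$ or $C_4$, the field $F$ really is the unique quadratic extension over which $S$ decomposes into two conjugate degree-2 points) or a separate treatment of those extensions. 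To be fair, the paper's own proof asserts the same uniqueness of $F$ without argument, so you are at the paper's level of rigor here, but attributing the claim to the proof of \Cref{cor: avoid three quadratic exts} is inaccurate, since that proof is precisely where the three-field possibility is recorded.
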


\begin{proof}
    Let $v \in \Omega_{k}$, let $w \in \Omega_{F}$ such that $w\mid v$. Let $\mathcal{A}=(a,g(t))$ be a generator of $\frac{\Br(X_F)}{\Br_0(X_F)}$ and let $\sigma$ denote the generator of $\Gal(F/k)$. If $[F_w: k_v]=2$ then there exists a point $P_w\in X_{\infty}(F_w)$ such that $\inv_w(\mathcal{A}(P_w))=0$ by \Cref{lem: even degree conic has points}. Now assume that $F_w=k_v$, then there exists a unique place $w'$ such that $w'\neq w$ and $w'\mid v$, hence $F_w=F_{w'}=k_v$. Take $P_v \in X(k_v)$ and set $P_v=P_w=P_{w'}=(t_v,y_v,z_v)$. We then have 
    \begin{align*}
    \inv_w(\mathcal{A}(P_v))+\inv_{w'}(\mathcal{A}(P_v)) &=\inv_w\Big((a,g(t_v))+(a,\sigma(g)(t_v))\Big) \\
    &=\inv_w\Big(a,g(t_v)\sigma(g)(t_v)\Big) \\
    &=\inv_v((a,c)).
    \end{align*}
    Now, picking $(P_w)\in X(\A_{F})$ as above, we have $$\sum_{w\in \Omega_{F}} \inv_w\Big(\mathcal{A}(P_w)\Big)=\sum_{\substack{w\in \Omega_{F} \\ [F_w\colon k_v]=2}}0+\sum_{\substack{w\in \Omega_{F} \\ [F_w\colon k_v]=1}} \inv_w\Big(\mathcal{A}(P_w)\Big)=\sum_{\substack{v\in \Omega_{k} \\ v \; \text{splits in} \; F}} \inv_v(a,c)$$ Now, if 
    $$\sum_{\substack{v\in \Omega_{k} \\ v \; \text{splits in} \; F}} \inv_v(a,c)=0\in \Q/\Z$$
    then $X(\A_F)^{\Br (X_F)}\neq \emptyset$ and it follows that $X(F)\neq \emptyset$. Since $F/k$ is the only even degree extension over which the set $S$ decomposes as two points of degree $2$, it follows from \Cref{cor: not surjective case} that if $L/k$ is any even degree extension, then $X(L)\neq \emptyset \Leftrightarrow X(\A_L)\neq \emptyset$.
\end{proof}

\section*{Declarations}
    \subsection*{Conflict of Interest Statement} The authors are not aware of any financial or non-financial interests related to this publication.
    \subsection*{Data Availability Statement} No data was generated or analyzed for this paper, so data sharing is not applicable.

\begin{bibdiv}
\begin{biblist}

\bib{Cla05}{article}{
    author={Clark, Pete L.},
    title={On the Hasse principle for Shimura curves},
    journal={Israel J. Math.},
    volume={171},
    date={2009},
    pages={349--365},
}

\bib{CTC79}{article}{
    title= {L'\'{e}quivalence rationnelle sur les points ferm\'{e}s des surfaces rationnelles fibr\'{e}es en coniques},
    author= {Colliot-Th\'{e}l\`ene, J.L.},
    author={Coray, Daniel},
    journal = {Compositio Math.},
    volume = {39},
    year = {1979},
    number = {3},
    pages = {301--332}
}
    
\bib{CTS13}{article}{
    author={Colliot-Th\'{e}l\`ene, Jean-Louis},
    author={Skorobogatov, Alexei N.},
    title={Good reduction of the Brauer-Manin obstruction},
    journal={Trans. Amer. Math. Soc.},
    volume={365},
    date={2013},
    number={2},
    pages={579--590},
}

\bib{CTS21}{book}{
   author={Colliot-Th\'el\`ene, Jean-Louis},
   author={Skorobogatov, Alexei N.},
   title={The Brauer-Grothendieck group},
   volume={71},
   publisher={Springer, Cham},
   date={2021},
   pages={xv+453},
   isbn={978-3-030-74247-8},
   isbn={978-3-030-74248-5},
   review={\MR{4304038}},
   doi={10.1007/978-3-030-74248-5},
}
    
\bib{CTSSD87}{article}{
    title={Intersection of two quadrics and Ch\^atelet surfaces},
    author={Colliot-Th\'el\`ene, J.L.},
    author={Sansuc, J.J.},
    author={Swinnnerton-Dyer, H.P.F.},
    journal={J. Reine Angew. Math},
    volume={374},
    date={1987}
}
    
\bib{CTSD94}{article}{
    author={Colliot-Th\'{e}l\`ene, Jean-Louis},
    author={Swinnerton-Dyer, Peter},
    title={Hasse principle and weak approximation for pencils of
    Severi-Brauer and similar varieties},
    journal={J. Reine Angew. Math.},
    volume={453},
    date={1994},
    pages={49--112},
}
     
\bib{Isk71}{article}{
    author={Iskovskih, V. A.},
    title={A counterexample to the Hasse principle for systems of two
    quadratic forms in five variables},
    language={Russian},
    journal={Mat. Zametki},
    volume={10},
    date={1971},
    pages={253--257}
}

\bib{Isk96}{article}{
    title={A rationality criterion for conic bundles},
    author={Iskovskih, V.A. },
    journal={Matematicheskii Sbornik},
    volume={187, Number 7},
    date={1996}
}
    
\bib{Kan85}{article}{
    author={Kanevsky, Dimitri},
    title={Application of the conjecture on the Manin obstruction to various
    Diophantine problems},
    note={Journ\'{e}es arithm\'{e}tiques de Besan\c{c}on (Besan\c{c}on, 1985)},
    journal={Ast\'{e}risque},
    number={147-148},
    date={1987},
    pages={307--314, 345},
    issn={0303-1179},
}

\bib{Poo09}{article}{
    title={Existence of rational points on smooth projective varieties},
    author={Poonen, Bjorn},
    journal={J. Eur. Math. Soc. (JEMS)},
    volume={11},
    date={2009}
}

\bib{Sko15}{article}{
    author={Skorobogatov, A.},
    title={Rational Points on Higher-Dimensional Varieties},
    conference={
        title={Rational Points on Higher-Dimensional Varieties, Hay-On-Wye},
    },
    date={2015},
}

\end{biblist}
\end{bibdiv}

\end{document}